\begin{document}

\title{Optimal Solutions to Relaxation in Multiple Control Problems of Sobolev 
Type with Nonlocal Nonlinear Fractional Differential Equations}

\titlerunning{Optimal Solutions to Relaxation in Multiple Control Problems of Sobolev Type}

\author{Amar Debbouche \and Juan J. Nieto \and \text{Delfim F. M. Torres}}

\authorrunning{A. Debbouche \and J. J. Nieto \and D. F. M. Torres}

\institute{A. Debbouche \at
Department of Mathematics, Guelma University, Guelma 24000, Algeria\\
\email{amar$_{-}$debbouche@yahoo.fr}
\and
J. J. Nieto \at
Departamento de An\'alisis Matem\'atico,
Universidad de Santiago de Compostela,\\
Santiago de Compostela 15782, Spain\\
\emph{and} Department of Mathematics, Faculty of Science, King Abdulaziz University,
Jeddah, Saudi Arabia\\
\email{juanjose.nieto.roig@usc.es}
\and
D. F. M. Torres \Letter \at
Center for Research and Development in Mathematics and Applications (CIDMA),\\
Department of Mathematics, University of Aveiro, 3810-193 Aveiro, Portugal\\
\email{delfim@ua.pt}
}

\date{Submitted: 26-Dec-2014 / Revised: 14-Apr-2015 / Accepted: 19-Apr-2015}

\maketitle


\begin{abstract}
We introduce the optimality question to the relaxation in multiple control
problems described by Sobolev type nonlinear fractional differential equations
with nonlocal control conditions in Banach spaces. Moreover, we consider
the minimization problem of multi-integral functionals, with integrands
that are not convex in the controls, of control systems
with mixed nonconvex constraints on the controls. We prove,
under appropriate conditions, that the relaxation problem
admits optimal solutions. Furthermore, we show that those optimal
solutions are in fact limits of minimizing sequences of systems
with respect to the trajectory, multi-controls,
and the functional in suitable topologies.

\keywords{fractional optimal multiple control
\and relaxation \and nonconvex constraints \and nonlocal control conditions
\and Sobolev type equations}

\subclass{26A33 \and 34B10 \and 49J15 \and 49J45}
\end{abstract}


\section{Introduction}

The memory and hereditary properties of various materials and processes in electrical circuits,
biology, biomechanics, etc., such as viscoelasticity, electrochemistry, control,
porous media and electromagnetic processes, are widely recognized to be well
predicted by using fractional differential operators \cite{AMA.4,AMA.13,AMA.32,AMA.37,AMA.39,AMA.47}.
During the past decades, the subject of fractional calculus, and its potential applications,
have gained an increase of importance, mainly because it has become
a powerful tool with more accurate and successful results
in modeling several complex phenomena in numerous seemingly
diverse and widespread fields of science and engineering
\cite{AMA.1,AMA.16,AMA.34,AMA.38}.

There has been a significant development in nonlocal problems for (fractional)
differential equations or inclusions (see, for instance,
\cite{AMA.6,AMA.8,AMA.9,AMA.10,AMA.33,AMA.43,AMA.46}).
Indeed, nonlinear fractional differential equations have, in recent years,
been object of an increasing interest because of their wide applicability
in nonlinear oscillations of earthquakes, many physical phenomena such
as seepage flow in porous media, and in fluid dynamic traffic model
\cite{AMA.23,AMA.25,AMA.26}. On the other hand,
there could be no manufacturing, no vehicles,
no computers, and no regulated environment, without control systems.
Control systems are most often based on the principle of feedback,
whereby the signal to be controlled is compared to a desired reference
signal and the discrepancy used to compute corrective control actions \cite{AMA.18}.
Over the last years, one of the fields of science that has been well established
is the fractional calculus of variations (see \cite{AMA.30,AMA.29,AMA.24} and references therein).
Moreover, a generalization of this area, namely the fractional optimal control,
is a topic of research by many authors \cite{AMA.2,AMA.19}.

The fractional optimal control of a distributed system is an optimal control problem
for which the system dynamics is defined with partial fractional differential equations \cite{AMA.35}.
The calculus of variations with constraints being sets of solutions of control systems,
allow us to justify, while performing numerical calculations, the passage
from a nonconvex optimal control problem to the convexified optimal control problem.
We then approximate the latter problem by a sequence of smooth and convex optimal control
problems, for which the optimality conditions are known
and methods of their numerical resolution are well developed.

Sobolev type semilinear equations serve as an abstract formulation
of partial differential equations, which arise in various applications such
as in the flow of fluid through fissured rocks, thermodynamics, and shear
in second order fluids. Further, the fractional differential equations
of Sobolev type appear in the theory of control of dynamical systems,
when the controlled system and/or the controller is described
by a fractional differential equation of Sobolev type. Furthermore,
the mathematical modeling and simulations of systems and processes
are based on the description of their properties in terms of fractional
differential equations of Sobolev type. These new models are more adequate
than previously used integer order models, so fractional order differential
equations of Sobolev type have been investigated by many researchers:
see, for example, Fe\u ckan, Wang and Zhou \cite{AMA.17}
and Li, Liang and Xu \cite{AMA.27}.
In our previous works \cite{AMA.12,AMA.11}, we have introduced the notion
of nonlocal control condition and presented a new kind of Sobolev type condition
that appears in terms of two linear operators. Kamocki \cite{AMA.22} studied
the existence of optimal solutions to fractional optimal control problems.
Liu et al. \cite{AMA.28} established the relaxation for nonconvex
optimal control problems described by fractional differential equations.
Motivated by the above facts and results, we introduce here a new kind of Sobolev type condition
and another form of a nonlocal control condition for nonlinear fractional multiple control systems.
The new Sobolev condition is given in terms of two linear operators and
requires formulating two other characteristic solution operators and their properties,
such as boundedness and compactness. Further, we consider
an optimal control problem $(P)$ of multi-integral functionals, with integrands
that are not convex in the controls. We establish an interrelation between
the solutions of problem $(P)$ and the relaxation problem $(RP)$. Under certain assumptions,
it is proved that $(RP)$ has a solution and that for any solution of $(RP)$
there is a minimizing sequence for $(P)$ converging, in the appropriate topologies,
to the solution of $(RP)$. The convergence takes place simultaneously with respect
to the trajectory, the control and the functional. This property
is usually called relaxation \cite{AMA.15,AMA.31}.

The paper is organized as follows. In Section~\ref{sec:2}, we formulate
and define the problems under study and we review some essential
facts from fractional calculus \cite{AMA.37,AMA.23},
semigroup theory \cite{AMA.36,AMA.44}, and multi-valued analysis \cite{AMA.3,AMA.21},
which are used throughout the work. In Section~\ref{sec:3}, we prove some auxiliary
results that are required for the proof of our main results. Section~\ref{sec:4} deals
with existence results for multiple control systems.
The main results are given in Section~\ref{sec:5}.
We end with Section~\ref{sec:conc} of conclusions.


\section{Preliminaries}
\label{sec:2}

Consider the following nonlocal nonlinear fractional control system of Sobolev type:
\begin{equation}
\label{eq:1.1}
L~^CD_t^{\alpha}[Mx(t)]+Ex(t)=f(t, x(t), B_{1}(t)u_{1}(t),\ldots, B_{r-1}(t)u_{r-1}(t)),~ t\in I
\end{equation}
\begin{equation}
\label{eq:1.2}
x(0)+h(x(t), B_{r}(t)u_{r}(t))=x_{0},
\end{equation}
with mixed nonconvex constraints on the controls
\begin{equation}
\label{eq:1.3}
u_{1}(t),\ldots, u_{r}(t)\in U(t, x(t))~ \text{a.e. on}~ I,
\end{equation}
where $^CD^{\alpha}_{t}$ is the Caputo fractional derivative of order
$\alpha$, $0<\alpha\leq1$, and $t\in I:=[0, a]$. Let $X, Y$ and $Z$
be three Banach spaces such that $Z$ is densely and continuously embedded
in $X$, the unknown function $x(\cdot)$ takes its values in $X$
and $x_{0}\in X$. We assume that the operators $E: D(E)\subset X\rightarrow Y$,
$M: D(M)\subset X\rightarrow Z$, $L: D(L)\subset Z\rightarrow Y$,
and $B_{1},\ldots,B_{r}:I\to\mathcal{L}(T, X)$ are linear and bounded
from $T$ into $X$. The space $T$ is a separable reflexive Banach space
modeling the control space. It is also assumed that $f: I\times X^{r}\rightarrow Y$
and $h: C(X^{2}, X)\rightarrow X$ are given abstract functions, to be specified later,
and $U:I\times X \rightrightarrows 2^T\backslash\{\emptyset\}$ is a multivalued map
with closed values, not necessarily convex.
Let $\widehat{\mathbb{R}} := \ ]-\infty, +\infty]$. For functions
$g_{1},\ldots,g_{r}:I\times X\times T \to \mathbb{R}$, we consider the problem
$$
\max\left\{J_{1},\ldots,J_{r}\Biggl\vert
\begin{array}{lll}
J_{1}(x, u_{1}):=\int_{I}g_{1}(t, x(t), u_{1}(t))dt\\
\qquad \vdots\\
J_{r}(x, u_{r}):=\int_{I}g_{r}(t, x(t), u_{r}(t))dt
\end{array}
\right\}
\longrightarrow \inf \eqno(P)
$$
on solutions of the control system \eqref{eq:1.1}--\eqref{eq:1.2} with constraint \eqref{eq:1.3}.
Let $g_{1, U},\ldots,g_{r, U}: I\times X\times T\to \widehat{\mathbb{R}}$ be the functions defined by
$$
\left\{
\begin{array}{ll}
g_{1,U}(t, x, u_1)
:=\left\{
\begin{array}{ll}
g_{1}(t, x, u_{1}),& \mbox{$u_{1}\in U(t, x)$},\\
+\infty, & \mbox{$u_{1}\notin U(t, x)$},
\end{array}\right.\\
\qquad \vdots\\
g_{r,U}(t, x, u_{r})
:=\left\{
\begin{array}{ll}
g_{r}(t, x, u_{r}),& \mbox{$u_{r}\in U(t, x)$},\\
+\infty, & \mbox{$u_{r}\notin U(t, x)$},
\end{array}\right.
\end{array}\right.
$$
and $g^{**}_{1}(t, x, u_{1}),\ldots,g^{**}_{r}(t, x, u_{r})$
be the bipolar of $u_{1}\to g_{1,U}(t, x, u_{1}),\ldots$,
$u_{r}\to g_{r,U}(t, x, u_{r})$, respectively. Along with problem
$(P)$, we also consider the relaxation problem
$$
\max\left\{J^{**}_{1},\ldots,J^{**}_{r}\Biggl\vert
\begin{array}{lll}
J^{**}_{1}(x, u_{1})=\int_{I}g^{**}_{1}(t, x(t), u_{1}(t))dt\\
\vdots\\
J^{**}_{r}(x, u_{r})=\int_{I}g^{**}_{r}(t, x(t), u_{r}(t))dt
\end{array}
\right\}
\longrightarrow \inf \eqno(RP)
$$
on the solutions of control system \eqref{eq:1.1}--\eqref{eq:1.2}
with the convexified constraints
\begin{equation}
\label{eq:1.4}
u_{1}(t),\ldots, u_{r}(t)\in \operatorname{cl} \operatorname{conv}U(t,x(t))\quad\text{a.e. on }I
\end{equation}
on the controls, where $conv$ denote the convex hull and $cl$ the closure.
In our results, we will denote by $\mathcal{R}_U$ and $\mathcal{T}r_{U},
(\mathcal{R}_{\operatorname{cl} \operatorname{conv}U}$
and $\mathcal{T}r_{\operatorname{cl} \operatorname{conv} U})$ the sets of all solutions and
all trajectories of control system \eqref{eq:1.1}--\eqref{eq:1.3}
(control system \eqref{eq:1.1}--\eqref{eq:1.2},\eqref{eq:1.4}, respectively).

\begin{definition}
\label{Definition 2.1}
The fractional integral of order $\alpha>0$ of a function $f\in L^{1}([a,b],\mathbb{R})$ is given by
$$
I^{\alpha}_{a}f(t):=\frac{1}{\Gamma(\alpha)}\int_{a}^{t}(t-s)^{\alpha-1}f(s)ds,
$$
where $\Gamma$ is the classical gamma function.
\end{definition}

If $a=0$, then we can write $I^{\alpha}f(t) := (g_{\alpha}*f)(t)$, where
$$
g_{\alpha}(t):=
\left\{
\begin{array}{ll}
\frac{1}{\Gamma(\alpha)}t^{\alpha-1},& \mbox{$t>0$},\\
0, & \mbox{$t\leq 0$}
\end{array}\right.
$$
and, as usual, $*$ denotes convolution. Moreover,
$\lim\limits_{\alpha \downarrow 0} g_{\alpha}(t)=\delta(t)$
with $\delta$ the delta Dirac function.

\begin{definition}
\label{Definition 2.2}
The Riemann--Liouville fractional derivative of order $\alpha>0$,
$n-1<\alpha<n$, $n\in \mathbb{N}$, is given by
$$
^{L}D^{\alpha}f(t) := \frac{1}{\Gamma(n-\alpha)}\frac{d^{n}}{dt^{n}}
\int_{0}^{t}\frac{f(s)}{(t-s)^{\alpha+1-n}}ds,~
t>0,
$$
where function $f$ has absolutely continuous derivatives up to order $(n-1)$.
\end{definition}

\begin{definition}
\label{Definition 2.3}
The Caputo fractional derivative of order $\alpha>0$,
$n-1<\alpha<n$, $n\in \mathbb{N}$, is given by
$$
^{C}D^{\alpha}f(t) := ~^{L}D^{\alpha}\left(f(t)
-\sum\limits_{k=0}^{n-1}\frac{t^{k}}{k!}f^{(k)}(0)\right),
\quad t>0,
$$
where function $f$ has absolutely continuous derivatives up to order $(n-1)$.
\end{definition}

If $f$ is an abstract function with values in $X$, then the integrals
that appear in Definitions~\ref{Definition 2.1} to \ref{Definition 2.3}
are taken in Bochner's sense.

\begin{remark}
\label{Remark 2.1}
Let $n-1<\alpha<n$, $n\in \mathbb{N}$.
The following properties hold:
\begin{itemize}
\item [(i)] If $f\in C^{n}([0, \infty[)$, then
$$
^{C}D^{\alpha}f(t)=\frac{1}{\Gamma(n-\alpha)}
\int_{0}^{t}\frac{f^{(n)}(s)}{(t-s)^{\alpha+1-n}}ds
=I^{n-\alpha}f^{(n)}(t),
\quad t>0;
$$
\item [(ii)] The Caputo derivative of a constant function is equal to zero;

\item [(iii)] The Riemann--Liouville derivative of a constant function is given by
$$
^{L}D^{\alpha}_{a^{+}}C=\frac{C}{\Gamma(1-\alpha)}(t-a)^{-\alpha},~ 0<\alpha<1.
$$
\end{itemize}
\end{remark}

We make the following assumptions:
\begin{itemize}
\item [(H$_1$)] $L: D(L)\subset Z\rightarrow Y$ and $M: D(M)\subset X\rightarrow Z$
are linear operators, and $E: D(E)\subset X\rightarrow Y$ is closed.

\item [(H$_2$)] $D(M)\subset D(E)$, $\text{Im}(M)\subset D(L)$ and $L$ and $M$ are bijective.

\item [(H$_3$)] $L^{-1}: Y\rightarrow D(L)\subset Z$
and $M^{-1}: Z\rightarrow D(M)\subset X$ are linear, bounded and compact operators.
\end{itemize}
Note that (H$_3$) implies that $L$ and $M$ are closed.
Indeed, if $L^{-1}$ and $M^{-1}$ are closed and injective, then
their inverse are also closed. From (H$_1$)--(H$_3$) and the closed graph theorem,
we obtain the boundedness of the linear operator $L^{-1}EM^{-1}: Z\rightarrow Z$. Consequently,
$L^{-1}EM^{-1}$ generates a semigroup $\lbrace Q(t), t\geq0\rbrace$, $Q(t):=e^{L^{-1}EM^{-1}t}$.
We assume that $M_{0}:=\sup_{t\geq0}\Vert Q(t)\Vert<\infty$ and, for short,
we denote $C_{1}:=\Vert L^{-1}\Vert$ and $C_{2}:=\Vert M^{-1}\Vert$.
According to previous definitions, it is suitable to rewrite problem
\eqref{eq:1.1}--\eqref{eq:1.2} as the equivalent integral equation
\begin{multline}
\label{eq:2.1}
Mx(t)=Mx(0)\\
+\frac{1}{\Gamma(\alpha)}\int_{0}^{t}(t-s)^{\alpha-1}
[-L^{-1}Ex(s)+L^{-1}f(s, x(s), B_{1}(s)u_{1}(s),\ldots, B_{r-1}(s)u_{r-1}(s))]ds,
\end{multline}
provided the integral in \eqref{eq:2.1} exists a.e. in $t\in J$.
Before formulating the definition of mild solution of system
\eqref{eq:1.1}--\eqref{eq:1.3}, we first introduce some necessary notions.
Let $I:=[0,a]$ be a closed interval of the real line with the Lebesgue
measure $\mu$ and the $\sigma$-algebra $\Sigma$ of $\mu$ measurable sets.
The norm of the space $X$ (or $T$) will be denoted by $\|\cdot\|_X$
(or $\|\cdot\|_T$). We denote by $C(I,X)$ the space of all continuous
functions from $I$ into $X$ with the supnorm given by
$\|x\|_{C}:=\sup_{t\in I}\|x(t)\|_X$ for $x\in C(I,X)$.
For any Banach space $V$, the symbol $\omega$-$V$ stands for $V$ equipped
with the weak topology $\sigma(V,V^*)$. The same notation will be used
for subsets of $V$. In all other cases, we assume that $V$ and its subsets
are equipped with the strong (normed) topology.

Throughout the paper, $A:=-L^{-1}EM^{-1}: D(A)\subset Z\rightarrow Z$
is the infinitesimal generator of a compact analytic
semigroup of uniformly bounded linear operators $Q(\cdot)$ in $X$.
Then, there exists a constant $M_{0}\geq1$ such that $\Vert Q(t)\Vert\leq M_{0}$
for $t\geq0$. The operators $B_{i}\in L^{\infty}(I, \mathcal{L}(T, X))$,
and we let $\Vert B_{i}\Vert$ stand for $\Vert B_{i}\Vert_{L^{\infty}(I, \mathcal{L}(T, X))}$.

We now proceed with some basic definitions and results from multivalued analysis.
For more details on multivalued analysis
we refer to the books \cite{AMA.3,AMA.21}. We use the following symbols:
$P_{f}(T)$ is the set of all nonempty closed
subsets of $T$; $P_{bf}(T)$ is the set of all nonempty, closed and bounded
subsets of $T$. On $P_{bf}(T)$, we have a metric, known as the Hausdorff metric,
defined by
\[
d_{H}(A,B):=\max\left\lbrace\sup_{a\in A}d(a,B),\,\sup_{b\in B}d(b,A)\right\rbrace,
\]
where $d(x,C)$ is the distance from a point $x$ to a set $C$.
We say that a multivalued map is $H$-continuous if it is continuous in
the Hausdorff metric $d_{H}(\cdot,\cdot)$.
Let $F: I \rightrightarrows 2^{T}\backslash\{\emptyset\}$ be a multifunction.
For $1\leq p\leq +\infty$, we define
$S^{p}_{F}:=\lbrace f\in L^{p}(I, T): f(t)\in F(t)$ a.e. on $I\rbrace$.
We say that a multivalued map $F:I \rightrightarrows P_f(T)$ is measurable if
$F^{-1}(E)=\{t\in I:F(t)\cap E\neq\emptyset\}\in \Sigma$ for every closed
set $E\subseteq T$. If $F:I\times T\to P_f(T)$, then the measurability
of $F$ means that $F^{-1}(E)\in\Sigma\otimes\mathcal{B}_{T}$,
where $\Sigma\otimes\mathcal{B}_{T}$ is the $\sigma$-algebra of subsets in
$I\times T$ generated by the sets $A\times B$, $A\in\Sigma$,
$B\in\mathcal{B}_{T}$, and $\mathcal{B}_{T}$ is the $\sigma$-algebra of
the Borel sets in $T$.

Suppose that $V_{1}$ and $V_{2}$ are two Hausdorff topological spaces and
$F: V_{1}\to 2^{V_{2}} \backslash\{\emptyset\}$. We say that $F$
is lower semicontinuous in the sense of Vietoris (l.s.c., for short)
at a point $x_0\in V_{1}$, if for any open set $W\subseteq V_{2}$,
$F(x_0)\cap W\neq\emptyset$, there is a neighborhood
$O(x_0)$ of $x_0$ such that $F(x)\cap W\neq\emptyset$
for all $x\in O(x_0)$. Similarly, $F$ is said to be upper semicontinuous in the sense
of Vietoris (u.s.c., for short) at a point $x_0\in V_{1}$, if for any open set
$W\subseteq V_{2}$, $F(x_0)\subseteq W$, there is a neighborhood $O(x_0)$ of $x_0$
such that $F(x)\subseteq W$ for all $x\in O(x_0)$. For more properties of
l.s.c and u.s.c, we refer to the book \cite{AMA.21}.
Besides the standard norm on $L^q(I, T)$ (here, $T$ is a separable reflexive
Banach space), $1<q<\infty$, we also consider the so called weak norm:
\begin{equation}
\label{eq:2.2}
\|u_{i}(\cdot)\|_{\omega}:=\sup_{0\leq t_1\leq t_2\leq a}
\left\Vert\int_{t_1}^{t_2}u_{i}(s)ds\right\Vert_T,~  u_{i}\in L^q(I, T),
\ i=1,\ldots,r.
\end{equation}
The space $L^q(I, T)$ furnished with this norm will be denoted by
$L_{\omega}^q(I, T)$. The following result gives a relation between
convergence in $\omega$-$L^q(I, T)$ and convergence in $L_{\omega}^q(I, T)$.

\begin{lemma}(see \cite{AMA.41})
\label{Lemma 2.1}
If sequences $\{u_{1,n}\}_{n\geq1},\ldots,\{u_{r,n}\}_{n\geq1}\subseteq L^q(I, T)$
are bounded and converge to $u_{1},\ldots,u_{r}$ in $L_{\omega}^q(I, T)$,
respectively, then they converge to $u_{1},\ldots,u_{r}$
in $\omega$-$L^q(I, T)$, respectively.
\end{lemma}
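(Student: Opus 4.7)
The plan is to reduce to the case of a single sequence (the vector statement is componentwise) and to exploit the reflexivity of $L^q(I,T)$ together with the identification of its dual. First I would set $v_n := u_n - u$, so that $v_n$ is bounded in $L^q(I,T)$ and satisfies $\|v_n\|_\omega \to 0$, that is,
\[
\int_{t_1}^{t_2} v_n(s)\,ds \longrightarrow 0 \text{ in } T,\qquad 0\le t_1\le t_2\le a.
\]
The goal becomes $v_n \rightharpoonup 0$ in $L^q(I,T)$.

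Next I would invoke reflexivity: since $T$ is separable and reflexive and $1<q<\infty$, the space $L^q(I,T)$ is reflexive with dual isometric to $L^{q'}(I,T^*)$. Consequently every bounded subsequence of $\{v_n\}$ admits a further subsequence $v_{n_k}$ converging weakly in $L^q(I,T)$ to some $v$. I would then identify $v=0$ by testing against a rich family of dual elements. For every $x^* \in T^*$ and every $0\le t_1\le t_2\le a$, the element $x^*\chi_{[t_1,t_2]}$ belongs to $L^{q'}(I,T^*)$, and
\[
\left\langle x^*\chi_{[t_1,t_2]},\,v_{n_k}\right\rangle
= \left\langle x^*,\,\int_{t_1}^{t_2} v_{n_k}(s)\,ds \right\rangle \longrightarrow 0
\]
by the weak-norm convergence. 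The weak limit therefore satisfies $\langle x^*\chi_{[t_1,t_2]}, v\rangle=0$ for all such test functions, hence $\int_{t_1}^{t_2} v(s)\,ds=0$ for every subinterval, giving $v=0$ in $L^q(I,T)$.

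Finally, a standard subsequence-of-subsequence argument upgrades this to convergence of the whole sequence: if $v_n \not\rightharpoonup 0$, some $\varphi\in L^{q'}(I,T^*)$ and $\varepsilon>0$ would yield a subsequence with $|\langle\varphi,v_{n_k}\rangle|\ge\varepsilon$, contradicting the previous step applied to this subsequence. Applying the conclusion to each of the $r$ sequences $\{u_{i,n}-u_i\}$ gives the stated result.

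The main technical obstacle is the duality identification $(L^q(I,T))^* \cong L^{q'}(I,T^*)$ and the density argument: one must ensure that finite linear combinations of functions of the form $x^*\chi_{[t_1,t_2]}$ are dense in $L^{q'}(I,T^*)$. This follows from reflexivity of $T$ (which implies $T^*$ has the Radon--Nikodym property, so $L^{q'}(I,T^*)$ is indeed the dual) together with separability of $T$ (and hence $T^*$ in the reflexive case), which gives density of the simple $T^*$-valued functions built on the Borel algebra of $I$. Once this functional-analytic backdrop is in place, the remainder of the argument is a short duality computation, and no constructive estimate on the sequences themselves is required.
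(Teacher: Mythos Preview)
The paper does not supply its own proof of this lemma; it is quoted from Tolstonogov \cite{AMA.41} and stated without argument. So there is nothing in the paper to compare your attempt against directly.

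That said, your argument is correct and is the standard one. A minor remark: you present the duality identification $(L^q(I,T))^*\cong L^{q'}(I,T^*)$ as the ``main technical obstacle,'' but in fact you do not need the full dual description. Reflexivity of $L^q(I,T)$ (which follows from reflexivity of $T$ and $1<q<\infty$) already gives the weakly convergent subsequence, and the functionals $v\mapsto\langle x^*,\int_{t_1}^{t_2} v(s)\,ds\rangle$ are manifestly continuous linear functionals on $L^q(I,T)$ regardless of how the dual is represented. Since $\int_{t_1}^{t_2} v(s)\,ds=0$ for all subintervals forces $v=0$ a.e., these functionals separate points, and that is all that is required to pin down the weak limit. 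The density claim you make is true, but superfluous. With this simplification the proof is two lines, which matches the fact that the paper treats the result as a routine citation.
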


We make use of the following assumptions on the data of our problems.
\begin{itemize}
\item[(H1)] The nonlinear function $f: I\times X^{r}\to Y$ satisfies the following:
\begin{itemize}
\item[(1)] $t\to f(t, x_{1},\ldots,x_{r})$ is measurable
for all $(x_{1},\dots,x_{r})\in X^{r}$;

\item[(2)] $\Vert f(t, x_{1},\ldots,x_{r})-f(t, y_{1},\ldots,y_{r})\Vert_{Y}\leq k_{1}(t)
\sum_{i=1}^{r}\Vert x_{i}-y_{i}\Vert_{X}$ a.e. on $I$, $k_{1}\in L^{\infty}(I,\mathbb{R}^+)$;

\item[(3)] there exists a constant $0<\beta<\alpha$ such that
$\|f(t, x_{1},\ldots,x_{r})\|_Y\leq a_1(t)+c_1\sum_{i=1}^{r}\Vert x_{i}\Vert_{X}$
a.e. in $t\in I$, where $a_1\in L^{1/\beta}(I, \mathbb{R}^+)$ and $c_1>0$.
\end{itemize}

\item[(H2)] The nonlocal function $h: C(J: X, X)\rightarrow X$ satisfies the following:
\begin{itemize}
\item[(1)] $t\to h(x, y)$ is measurable for all $x,y\in X$;

\item[(2)] $\|h(x_{1}, y_{1})-h(x_{2}, y_{2})\|_X\leq k_{2}(t)\lbrace\|x_{1}-x_{2}\|_X+\|y_{1}-y_{2}\|_X\rbrace$
a.e. on $I$, $k_{2}\in L^{\infty}(I,\mathbb{R}^+)$;

\item[(3)] there exists a constant $0<\beta<\alpha$ such that
$\|h(x, y)\|_X\leq a_2(t)+c_2\lbrace\|x\|_X+\|y\|_X\rbrace$
a.e. in $t\in I$ and all $x,y\in X$, where
$a_2\in L^{1/\beta}(\mathbb{R}^+)$ and $c_2>0$.
\end{itemize}

\item[(H3)] The multivalued map $U: I\times X
\rightrightarrows P_{f}(T)$ is such that:
\begin{itemize}
\item[(1)] $t\to U(t,x)$ is measurable for all $x\in X$;

\item[(2)] $d_H(U(t, x),U(t, y))\leq k_3(t)\|x-y\|_X$ a.e. on $I$,
$k_3\in L^{\infty}(I,\mathbb{R}^+)$;

\item[(3)] there exists a constant $0<\beta<\alpha$ such that
$$
\|U(t, x)\|_T=\sup\{\|v\|_T: v\in U(t, x)\}
\leq a_3(t)+c_3\|x\|_X \quad \text{ a.e. in } t\in I,
$$
where $a_3\in L^{1/\beta}(I, \mathbb{R}^+)$ and $c_3>0$.
\end{itemize}

\item[(H4)] Functions $g_{i}: I\times X\times T\to \mathbb{R}$,
$i=1,\ldots,r$, are such that:
\begin{itemize}
\item[(1)] the map $t\to g_{i}(t, x, u_{i})$ is measurable
for all $(x, u_{i})\in X\times T$;

\item[(2)] $\vert g_{i}(t, x ,u_{i})-g_{i}(t, y ,v_{i})\vert
\leq k^{\prime}_{4}(t)\|x-y\|_X +k^{\prime\prime}_{4}\Vert u_{i}-v_{i}\Vert_{T}$
a.e., $k^{\prime}_{4}\in L^{1}(I,\mathbb{R}^+)$, $k^{\prime\prime}_{4}>0$;

\item[(3)] $\vert g_{i}(t, x ,u_{i})\vert\leq a_{4}(t)+b_{4}(t)\Vert x\Vert_{X}
+c_{4}\Vert u_{i}\Vert_{T}$ a.e. $t\in I,
a_4,b_{4}\in L^{1/\beta}(I, \mathbb{R}^+)$, $c_{4}>0$.
\end{itemize}
\end{itemize}

\begin{definition}
\label{Definition 2.4}
A solution of the control system \eqref{eq:1.1}--\eqref{eq:1.3} is defined
to be a vector of functions $(x(\cdot), u_{1}(\cdot),\ldots,u_{r}(\cdot))$
consisting of a trajectory $x\in C(I, X)$ and $r$ multiple controls
$u_{1},\ldots,u_{r}$ $\in L^{1}(I, T)$ satisfying system \eqref{eq:1.1}--\eqref{eq:1.2}
and the inclusion \eqref{eq:1.3} almost everywhere.
\end{definition}

A solution of control system \eqref{eq:1.1}--\eqref{eq:1.2},
\eqref{eq:1.4} can be defined similarly.

\begin{definition}(see \cite{AMA.46,AMA.12,AMA.45})
\label{Definition 2.5}
A vector of functions $(x,u_{1},\ldots,u_{r})$ is a mild solution of the control
system \eqref{eq:1.1}--\eqref{eq:1.3} iff $x\in C(I,X)$ and there exist
$u_{1},\,\ldots,\,u_{r} \in L^1(I,T)$ such that
$u_{1}(t),\,\ldots,\,u_{r}(t)$ $\in U(t,x(t))$
a.e. in $t\in I$, $x(0)=x_0-h(x(t), B_{r}(t)u_{r}(t))$,
and the following integral equation is satisfied:
\begin{multline*}
x(t)=S_{\alpha}(t)M[x_{0}-h(x(t), B_{r}(t)u_{r}(t))]\\
+\int_{0}^{t}(t-s)^{\alpha-1}T_{\alpha}(t-s)L^{-1}
f(s, x(s), B_{1}(s)u_{1}(s),\ldots, B_{r-1}(s)u_{r-1}(s))ds,
\end{multline*}
where
$$
S_{\alpha}(t):=\int_{0}^{\infty}M^{-1}\zeta_{\alpha}(\theta)Q(t^{\alpha}\theta)d\theta,
\quad T_{\alpha}(t):=\alpha\int_{0}^{\infty}M^{-1}\theta\zeta_{\alpha}(\theta)
Q(t^{\alpha}\theta)d\theta,
$$
$$
\zeta_{\alpha}(\theta):=\frac{1}{\alpha}\theta^{-1-\frac{1}{\alpha}}
\varpi_{\alpha}(\theta^{-\frac{1}{\alpha}})\geq0,
\quad \varpi_{\alpha}(\theta):=\frac{1}{\pi}\sum_{n=1}^{\infty}(-1)^{n-1}
\theta^{-\alpha n-1}\frac{\Gamma(n\alpha+1)}{n!}\sin (n\pi\alpha),
\ \theta \in ]0, \infty[,
$$
with $\zeta_{\alpha}$ the probability density function defined on $]0, \infty[$,
that is, $\zeta_{\alpha}(\theta)\geq 0$, $\theta\in ]0, \infty[$,
and $\int_{0}^{\infty}\zeta_{\alpha}(\theta)d\theta=1$.
\end{definition}

A similar definition can be introduced for the control
system \eqref{eq:1.1}--\eqref{eq:1.2},\eqref{eq:1.4}.

\begin{remark}(see \cite{AMA.45})
\label{Remark 2.2}
One has
$\displaystyle \int_0^{\infty}\theta\xi_{\alpha}(\theta)d\theta
=\frac{1}{\Gamma(1+\alpha)}$.
\end{remark}

\begin{lemma}(see \cite{AMA.45})
\label{Lemma 2.2}
The characteristic operators $S_{\alpha}$ and $T_{\alpha}$
have the following properties:
\begin{itemize}
\item[(1)] for any fixed $t\geq0$, $S_{\alpha}(t)$ and $T_{\alpha}(t)$
are linear and bounded operators, i.e., for any $x\in X$,
\[
\|S_{\alpha}(t)x\|_X\leq C_{2}M_0\|x\|_X,\quad \|T_{\alpha}(t)x\|_X
\leq\frac{C_{2}M_0}{\Gamma(\alpha)}\|x\|_X;
\]

\item[(2)] $\{S_{\alpha}(t),t\geq0\}$ and $\{T_{\alpha}(t),t\geq0\}$
are strongly continuous;

\item[(3)] for every $t>0$, $S_{\alpha}(t)$ and $T_{\alpha}(t)$
are compact operators.
\end{itemize}
\end{lemma}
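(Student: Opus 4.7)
The plan is to verify the three properties directly from the Bochner integral representations of $S_\alpha$ and $T_\alpha$, exploiting that $\zeta_\alpha$ is a probability density on $]0,\infty[$ and that its first moment equals $1/\Gamma(1+\alpha)$ (Remark~\ref{Remark 2.2}).

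For (1), I would move the norm inside the integrals and estimate pointwise. Since $\|M^{-1}\|\leq C_2$ and $\|Q(s)\|\leq M_0$ uniformly in $s\geq 0$, I obtain
$$
\|S_\alpha(t)x\|_X\leq C_2 M_0 \|x\|_X\int_0^\infty \zeta_\alpha(\theta)\,d\theta = C_2 M_0\|x\|_X,
$$
and analogously
$$
\|T_\alpha(t)x\|_X\leq \alpha C_2 M_0\|x\|_X\int_0^\infty \theta\zeta_\alpha(\theta)\,d\theta=\frac{\alpha C_2 M_0}{\Gamma(1+\alpha)}\|x\|_X=\frac{C_2 M_0}{\Gamma(\alpha)}\|x\|_X,
$$
using $\Gamma(1+\alpha)=\alpha\Gamma(\alpha)$. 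For (2), fix $t_0\geq 0$ and $x\in X$ and write
$$
\|S_\alpha(t)x-S_\alpha(t_0)x\|_X\leq C_2\int_0^\infty \zeta_\alpha(\theta)\|Q(t^\alpha\theta)x-Q(t_0^\alpha\theta)x\|_X\,d\theta.
$$
For each fixed $\theta>0$ the integrand tends to $0$ as $t\to t_0$ by strong continuity of the semigroup $\{Q(s)\}_{s\geq 0}$, and it is dominated by the integrable function $2C_2 M_0\zeta_\alpha(\theta)\|x\|_X$. Lebesgue's dominated convergence theorem then yields strong continuity of $S_\alpha$, and the same argument with the additional weight $\alpha\theta\zeta_\alpha(\theta)$ (integrable by Remark~\ref{Remark 2.2}) works for $T_\alpha$.

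For (3), the crux is the compactness for $t>0$. Since $M^{-1}$ is compact by (H$_3$) and $Q(t^\alpha\theta)$ is uniformly bounded, $M^{-1}Q(t^\alpha\theta)$ is compact for every $\theta>0$. I would approximate $S_\alpha(t)$ by the truncated operators
$$
S_\alpha^{\varepsilon}(t)x:=\int_{\varepsilon}^{1/\varepsilon} M^{-1}\zeta_\alpha(\theta)Q(t^\alpha\theta)x\,d\theta,\qquad 0<\varepsilon<1.
$$
Because $Q$ is a compact analytic semigroup, the map $s\mapsto Q(s)$ is norm-continuous on any interval bounded away from $0$, hence $\theta\mapsto M^{-1}\zeta_\alpha(\theta)Q(t^\alpha\theta)$ is norm-continuous on $[\varepsilon,1/\varepsilon]$ with compact values. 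Thus $S_\alpha^{\varepsilon}(t)$ is a uniform limit of Riemann sums of compact operators, so itself compact. The estimate
$$
\|S_\alpha(t)-S_\alpha^{\varepsilon}(t)\|\leq C_2 M_0\left(\int_0^{\varepsilon}\zeta_\alpha(\theta)d\theta+\int_{1/\varepsilon}^{\infty}\zeta_\alpha(\theta)d\theta\right)\longrightarrow 0\quad (\varepsilon\to 0^+)
$$
shows that $S_\alpha(t)$ is the operator-norm limit of compact operators, hence compact. The same argument with the extra weight $\alpha\theta$ (integrable thanks to Remark~\ref{Remark 2.2}) gives the compactness of $T_\alpha(t)$.

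The step I expect to require the most care is the compactness of the truncated integral $S_\alpha^{\varepsilon}(t)$: one must use the \emph{norm}-continuity of $Q(\cdot)$ on intervals away from the origin, which comes precisely from $Q$ being a \emph{compact} analytic semigroup, in order to realize the Bochner integral as a uniform limit of Riemann sums of compact operators. Without this norm-continuity one would only obtain strong convergence of the Riemann sums, which does not preserve compactness, so this is where the hypothesis that $Q$ is compact analytic (and not merely strongly continuous) is essential.
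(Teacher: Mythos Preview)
Your argument is correct in all three parts. Note that the paper does not actually supply a proof of this lemma: it is stated with a citation to \cite{AMA.45} (Zhou--Jiao), so there is no in-text proof to compare against. Your proof is precisely the standard one underlying that reference, adapted to the present Sobolev setting with the extra factor $M^{-1}$.

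One remark on part~(3): your truncation-and-Riemann-sum argument is valid, but in this particular paper there is a shorter route. Since $M^{-1}$ is a bounded linear operator, it can be pulled outside the Bochner integral, so
\[
S_\alpha(t)=M^{-1}\!\left(\int_0^\infty \zeta_\alpha(\theta)Q(t^\alpha\theta)\,d\theta\right),
\qquad
T_\alpha(t)=M^{-1}\!\left(\alpha\int_0^\infty \theta\zeta_\alpha(\theta)Q(t^\alpha\theta)\,d\theta\right),
\]
where the operators in parentheses are bounded (with norms at most $M_0$ and $M_0/\Gamma(\alpha)$ respectively, by your part~(1) computation). Since $M^{-1}$ is compact by~(H$_3$), the composition of a compact operator with a bounded one is compact, and you are done for every $t\ge 0$, not only $t>0$. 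Your more elaborate argument has the advantage of not relying on the compactness of $M^{-1}$: it would still go through using only the compactness of the semigroup $Q(t)$ for $t>0$, which is the route taken in \cite{AMA.45} where no operator $M^{-1}$ is present.
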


\begin{lemma}(see \cite{AMA.14})
\label{Lemma 2.3}
Let $x(t)$ be continuous and non-negative on $[0, a]$. If
$$
x(t)\leq \psi(t)+\lambda\int_{0}^{t}\frac{x(s)}{(t-s)^{\gamma}}ds,
\quad 0\leq t\leq a,
$$
where $0\leq\gamma<1$, $\psi(t)$ is a non-negative monotonic increasing
continuous function on $[0, a]$, and $\lambda$ is a positive constant, then
$$
x(t)\leq \psi(t)E_{1-\gamma}(\lambda\Gamma(1-\gamma)t^{1-\gamma}),
\quad 0\leq t\leq a,
$$
where $E_{1-\gamma}(z)$ is the Mittag--Leffler function defined for all $\gamma<1$ by
$$
E_{1-\gamma}(z):=\sum\limits_{n=0}^{\infty}\frac{z^{n}}{\Gamma(n(1-\gamma)+1)}.
$$
\end{lemma}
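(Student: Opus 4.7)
The plan is to prove the inequality by iterating the hypothesis, bounding the resulting Volterra-type iterates via a Beta-function identity, and then summing the series to recognize the Mittag--Leffler function. Define the positive linear operator
\[
B\phi(t) := \lambda\int_0^t (t-s)^{-\gamma}\phi(s)\,ds,
\]
so that the hypothesis reads $x \leq \psi + Bx$ pointwise on $[0,a]$. Since $B$ preserves the inequality, iterating $n$ times yields
\[
x(t)\leq \sum_{k=0}^{n-1}(B^k\psi)(t) + (B^n x)(t).
\]

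The key step is to prove by induction on $k$ that
\[
(B^k\psi)(t) \leq \psi(t)\,\frac{[\lambda\Gamma(1-\gamma)]^k\,t^{k(1-\gamma)}}{\Gamma(k(1-\gamma)+1)}.
\]
For $k=0$ the bound is trivial. For the inductive step I would apply $B$ to the inductive bound, use monotonicity of $\psi$ to replace $\psi(s)$ by $\psi(t)$ inside the innermost integral, and then invoke the Beta-function identity
\[
\int_0^t (t-s)^{a-1}s^{b-1}\,ds = t^{a+b-1}\,\frac{\Gamma(a)\Gamma(b)}{\Gamma(a+b)}
\]
with $a = 1-\gamma$ and $b = (k-1)(1-\gamma)+1$. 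The ratio $\Gamma(1-\gamma)\Gamma((k-1)(1-\gamma)+1)/\Gamma(k(1-\gamma)+1)$ that appears telescopes the induction constants into exactly the claimed form.

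For the remainder, continuity of $x$ on $[0,a]$ provides a uniform bound $x(t)\leq M$, and the very same Beta-function estimate (applied to the constant majorant) gives
\[
(B^n x)(t) \leq M\,\frac{[\lambda\Gamma(1-\gamma)\,a^{1-\gamma}]^n}{\Gamma(n(1-\gamma)+1)}\;\longrightarrow\;0
\quad(n\to\infty),
\]
since $\Gamma(n(1-\gamma)+1)$ grows faster than any exponential in $n$. Passing to the limit and summing,
\[
\sum_{k=0}^{\infty}\frac{[\lambda\Gamma(1-\gamma)\,t^{1-\gamma}]^k}{\Gamma(k(1-\gamma)+1)} = E_{1-\gamma}\bigl(\lambda\Gamma(1-\gamma)t^{1-\gamma}\bigr),
\]
which delivers the conclusion.

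The main obstacle is the inductive Beta-function computation: one must track that the exponent $b-1 = (k-1)(1-\gamma)$ stays strictly greater than $-1$ so every iterated integral converges, and that the monotonicity trick remains valid at each stage (it does, because the factor multiplying $\psi$ in each bound is a non-negative power of $t$, hence itself monotone increasing, so $\psi(s)$ can legitimately be replaced by $\psi(t)$ in every iteration). Once the iterate bound is locked in, vanishing of the remainder and recognition of the Mittag--Leffler series are routine.
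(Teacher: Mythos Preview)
Your argument is correct: the iteration of the operator $B$, the inductive Beta-function bound on $B^k\psi$, and the vanishing remainder together yield exactly the Mittag--Leffler estimate. One small comment on your ``main obstacle'' paragraph: at each inductive step you only need $\psi(s)\leq\psi(t)$ for $s\leq t$, which comes directly from the monotonicity hypothesis on $\psi$ itself; there is no need to argue that the polynomial factor is monotone.

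As for comparison, the paper does not prove this lemma at all---it is stated with the citation ``(see \cite{AMA.14})'' and used as a black box. Your proof is the classical one (essentially that of Dixon and McKee in the cited reference), so there is no alternative approach in the paper to contrast with.
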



\section{Auxiliary Results}
\label{sec:3}

In this section we give some auxiliary results, which are required
for the proof of our main results. We begin with a prior
estimation of the trajectory of the control system.

\begin{lemma}
\label{Lemma 3.1}
For any admissible trajectory $x$ of the control system
\eqref{eq:1.1}--\eqref{eq:1.2},\eqref{eq:1.4}, that is,
for any $x\in\mathcal{T}r_{\operatorname{cl} \operatorname{conv}U}$,
there is a constant $L_{0}$ such that
\begin{equation}
\label{eq:3.1}
\|x\|_C\leq L_0.
\end{equation}
\end{lemma}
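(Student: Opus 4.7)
The plan is to insert the admissible pair $(x,u_1,\ldots,u_r)$ into the mild-solution representation of Definition~\ref{Definition 2.5}, estimate each term using the norm bounds of Lemma~\ref{Lemma 2.2} together with the growth hypotheses (H1)(3), (H2)(3), (H3)(3), and finish with the Henry--Gronwall inequality of Lemma~\ref{Lemma 2.3}. Since $u_i(t)\in\operatorname{cl}\operatorname{conv}U(t,x(t))$ and the sup bound in (H3)(3) is preserved by closed convex hull, one still has $\|u_i(t)\|_T\le a_3(t)+c_3\|x(t)\|_X$ for every $i=1,\ldots,r$, and consequently $\|B_i(s)u_i(s)\|_X\le\|B_i\|(a_3(s)+c_3\|x(s)\|_X)$.

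First I would bound the free term. Using $\|S_{\alpha}(t)\|\le C_2M_0$ (Lemma~\ref{Lemma 2.2}(1)) and (H2)(3), I get
\[
\|S_{\alpha}(t)M[x_0-h(x(t),B_r(t)u_r(t))]\|_X
\le C_2M_0\|M\|\bigl(\|x_0\|_X+a_2(t)+c_2(1+c_3\|B_r\|)\|x(t)\|_X+c_2\|B_r\|a_3(t)\bigr).
\]
For the convolution term, (H1)(3) combined with the $U$-bound yields
\[
\|L^{-1}f(s,x(s),B_1u_1,\ldots,B_{r-1}u_{r-1})\|_X
\le C_1\bigl(a_1(s)+A(s)+B\|x(s)\|_X\bigr)
\]
for explicit $A\in L^{1/\beta}$ and constant $B>0$ formed from $c_1$, $c_3$, and the $\|B_i\|$. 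Using $\|T_{\alpha}(t-s)\|\le C_2M_0/\Gamma(\alpha)$, the convolution is dominated by $\dfrac{C_1C_2M_0}{\Gamma(\alpha)}\displaystyle\int_0^t(t-s)^{\alpha-1}\bigl(a_1(s)+A(s)+B\|x(s)\|_X\bigr)ds$.

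The next step is the H\"older trick that exploits the condition $\beta<\alpha$ common to (H1)(3), (H2)(3), (H3)(3): choosing $p=1/(1-\beta)$ and $q=1/\beta$ gives $\int_0^t(t-s)^{(\alpha-1)p}ds=\dfrac{t^{1-(1-\alpha)p}}{1-(1-\alpha)p}$, finite because $(1-\alpha)p<1$, while $a_1+A\in L^{1/\beta}(I,\mathbb{R}^+)$. This reduces the convolution to the form $\Psi(t)+K\int_0^t(t-s)^{\alpha-1}\|x(s)\|_X\,ds$ with a continuous, non-decreasing $\Psi$. Combined with the free-term estimate, provided the Lipschitz-type constant of $h$ is small enough that $\kappa:=C_2M_0\|M\|c_2(1+c_3\|B_r\|)<1$ (a standard smallness hypothesis in this setting, needed to absorb the $\|x(t)\|_X$ coming from $h$), we arrive at
\[
\|x(t)\|_X\le \widetilde{\Psi}(t)+\widetilde{K}\int_0^t\frac{\|x(s)\|_X}{(t-s)^{1-\alpha}}ds,\quad t\in I,
\]
with $\widetilde{\Psi}$ continuous, non-decreasing, and independent of $(x,u_1,\ldots,u_r)$.

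Finally I would apply Lemma~\ref{Lemma 2.3} with $\gamma=1-\alpha\in[0,1)$ to conclude
\[
\|x(t)\|_X\le\widetilde{\Psi}(a)E_{\alpha}\bigl(\widetilde{K}\Gamma(\alpha)a^{\alpha}\bigr)=:L_0,
\]
independent of $x$, and take the supremum over $t\in I$ to obtain \eqref{eq:3.1}. The main obstacle is purely technical and lies in the $h$-term: because $h$ is evaluated at $x(t)$ itself rather than under the integral, it cannot be absorbed by Gronwall and must be moved to the left-hand side; this is why the smallness of $\kappa$ (implicit in the problem data) is essential. Once that term is handled, the singular-kernel Gronwall inequality does all the remaining work.
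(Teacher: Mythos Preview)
Your proof follows essentially the same route as the paper's: write the mild solution via Definition~\ref{Definition 2.5}, bound $S_\alpha$ and $T_\alpha$ by Lemma~\ref{Lemma 2.2}, invoke the growth conditions (H1.3)--(H3.3), split off the $L^{1/\beta}$ part of the integrand by H\"older with exponents $1/(1-\beta)$ and $1/\beta$, and close with the singular Gronwall inequality of Lemma~\ref{Lemma 2.3}. The paper reaches exactly the same intermediate inequality and then simply appeals to Lemma~\ref{Lemma 2.3}.

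The one place where you are more careful than the paper is the $h$-term: since $h$ is evaluated at $x(t)$ and not under the time integral, the contribution $C_2M_0\|M\|c_2(1+c_3\|B_r\|)\|x(t)\|_X$ cannot be absorbed by Gronwall and must be moved to the left, which you handle by the smallness condition $\kappa<1$. The paper's displayed estimate contains the same $\|x\|_X$ factor in the $h$-block but passes directly to Lemma~\ref{Lemma 2.3} without isolating or absorbing it; your explicit treatment of this point is the honest version of what the paper leaves implicit.
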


\begin{proof}
From Definition~\ref{Definition 2.5}, there exist
$u_{1}(t),\ldots,u_{r}(t)\in \operatorname{cl} \operatorname{conv} U(t,x(t))$
a.e. in $t\in I$ for any $x\in\mathcal{T}r_{\operatorname{cl} \operatorname{conv}U}$, and
\begin{align*}
x(t)&=S_{\alpha}(t)M[x_{0}-h(x(t), B_{r}(t)u_{r}(t))]\\
&+\int_{0}^{t}(t-s)^{\alpha-1}T_{\alpha}(t-s)L^{-1}f(s, x(s),
B_{1}(s)u_{1}(s),\ldots, B_{r-1}(s)u_{r-1}(s))ds.
\end{align*}
Then, by Lemma~\ref{Lemma 2.2}, (H1.3), (H2.3), (H3.3), and H\"older's inequality, one gets
\begin{align*}
\|x(t)\|_X&\leq C_{2}M_0\Vert M\Vert\left\lbrace\|x_0\|_X
+ a_{2}(t)+c_2[\Vert x\Vert_X +\Vert B_r\Vert(a_3(t)+c_3\Vert x\Vert_X)]\right\rbrace\\
&\quad +\frac{C_{1}C_{2}M_0}{\Gamma(\alpha)}
\int_0^t(t-s)^{\alpha-1}\left\lbrace a_1(s)+c_1[\Vert x\Vert_X
+\sum_{i=1}^{r-1}\Vert B_i\Vert(a_3(s)+c_3\Vert x\Vert_X)]\right\rbrace ds\\
&\leq C_{2}M_0\Vert M\Vert\left\lbrace\|x_0\|_X+ a_{2}(t)
+c_2[\Vert x\Vert_X +\Vert B_r\Vert(a_3(t)+c_3\Vert x\Vert_X)]\right\rbrace\\
&\quad +\frac{C_{1}C_{2}M_0}{\Gamma(\alpha)}\left[
\frac{(1-\beta)}{\alpha-\beta}a^{\frac{\alpha-\beta}{1-\beta}}\right]^{1-\beta}
\left[\Vert a_1\Vert_{L^{\frac{1}{\beta}}}+c_1
\sum_{i=1}^{r-1}\Vert B_i\Vert\Vert a_3\Vert_{L^{\frac{1}{\beta}}}\right]\\
&\quad +\frac{C_{1}C_{2}M_0}{\Gamma(\alpha)}\left(c_1+c_1c_3
\sum_{i=1}^{r-1}\Vert B_i\Vert\right)
\int_0^t(t-s)^{\alpha-1}\Vert x\Vert_X ds.
\end{align*}
From the above inequality, using the well-known singular-version of Gronwall's
inequality (see Lemma~\ref{Lemma 2.3}), we can deduce that the inequality
\eqref{eq:3.1} is satisfied, that is, there exists a constant $L_{0}>0$
such that $\|x\|_C\leq L_{0}$.
\hfill $\Box$
\end{proof}

Let $\operatorname{pr}_{L_0}:X\to X$ be a $L_0$-radial retraction, that is,
\[
\operatorname{pr}_{L_0}(x)
:=
\begin{cases}
x, & \|x\|_X\leq L_0,\\
\frac{L_0x}{\|x\|_X}, & \|x\|_X>L_0.
\end{cases}
\]
This map is Lipschitz continuous. We define $U_1(t,x):= U(t,\operatorname{pr}_{L_0}x)$.
Evidently, $U_1$ satisfies (H3.1) and (H3.2). Moreover, by the properties
of $\operatorname{pr}_{L_0}$, we have, a.e. in $t\in I$, all $x\in X$ and
all $u_{1},\ldots,u_{r}\in U_1(t, x)$, that
\[
\sup\lbrace\|u_{1}\|_T,\dots,\|u_{r}\|_T\rbrace\leq a_3(t)+c_3L_0\,\,
\textrm {and}\,\,\sup\lbrace\|u_{1}\|_T,
\dots,\|u_{r}\|_T\rbrace\leq a_3(t)+c_3\|x\|_X.
\]
Hence, Lemma~\ref{Lemma 3.1} is still valid with $U(t, x)$
substituted by $U_1(t,x)$. Consequently, without loss of generality,
we assume that, a.e. in $t\in I$ and all $x\in X$,
\begin{equation}
\label{eq:3.2}
\sup\{\|v\|_T:v\in U(t, x)\}\leq\varphi(t)=a_3(t)+c_3L_0,
~ \text{with}~ \varphi\in L^{1/\beta}(I, \mathbb{R}^+).
\end{equation}
Now we consider the following fractional nonlocal semilinear
auxiliary problem of Sobolev type:
\begin{equation}
\label{eq:3.3}
L~^CD_t^{\alpha}[Mx(t)]+Ex(t)=f(t, x(t)),~ t\in I,
\end{equation}
\begin{equation}
\label{eq:3.4}
x(0)=h(x(t)).
\end{equation}
It is clear that, for every $f\in L^{1/\beta}(I\times X, Y)$, $h\in L^{1/\beta}(I:X, X)$,
$0<\beta<\alpha$, the problem \eqref{eq:3.3}--\eqref{eq:3.4}
has a unique mild solution $H(f, h)\in C(I, X)$, which
is given by
\[
H(f,h)(t)=S_{\alpha}(t)Mh(x(t))+\int_{0}^{t}(t-s)^{\alpha-1}T_{\alpha}(t-s)L^{-1}f(s, x(s))ds.
\]
Let $\varphi$ be defined by \eqref{eq:3.2}. We put
\begin{gather*}
T_{\varphi}=\lbrace u_{i}\in L^{1/\beta}(I, T): \Vert u_{i}(t)\Vert\leq\varphi(t)
~ \text{a.e.}~ t\in I, i=1,\ldots,r \rbrace,\\
X_{\varphi}=\lbrace f, h\vert f\in L^{1/\beta}(I\times X, Y),
h\in L^{1/\beta}(I:X, X)\rbrace.
\end{gather*}
The following lemma gives a property of the solution map
$S: T_{\varphi}\to C(I, X)$ of \eqref{eq:1.1}--\eqref{eq:1.2},
which is crucial in our investigation.

\begin{lemma}
\label{Lemma 3.2}
The solution map $S: T_{\varphi}\to C(I, X)$ is continuous from
$\omega$-$T_{\varphi}$ into $C(I, X)$.
\end{lemma}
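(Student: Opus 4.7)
The plan is to prove the lemma by an Arzel\`a-Ascoli extraction combined with passage to the limit in the mild-solution integral equation of Definition~\ref{Definition 2.5}, followed by a uniqueness argument. Fix a sequence $\{u_n\}=\{(u_{1,n},\ldots,u_{r,n})\}\subset T_\varphi$ with $u_n\to u$ in $\omega$-$T_\varphi$, and set $x_n:=S(u_n)$, $x:=S(u)$; the goal is to show $x_n\to x$ in $C(I,X)$.

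First I would show that $\{x_n\}$ is relatively compact in $C(I,X)$. Uniform boundedness is immediate from Lemma~\ref{Lemma 3.1}, since the estimate producing $L_0$ uses only the pointwise control bound $\|u_{i,n}(s)\|_T\leq\varphi(s)$. Equicontinuity of $\{x_n\}$ on $I$ follows from the explicit mild-solution representation: the first summand is equicontinuous in $t$ by the strong continuity of $S_\alpha$ (Lemma~\ref{Lemma 2.2}(2)), while the Duhamel term admits a standard estimate of $\|x_n(t')-x_n(t)\|_X$ via the norm bound in Lemma~\ref{Lemma 2.2}(1), H\"older's inequality with exponent $1/\beta$, and the growth hypotheses (H1.3) and (H3.3). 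Pointwise relative compactness at each $t>0$ is obtained by the classical splitting trick on the Duhamel integral: on $[0,t-\epsilon]$ one exploits the compactness of $T_\alpha(t-s)$ for $t-s\geq\epsilon$ (Lemma~\ref{Lemma 2.2}(3)), and the tail over $[t-\epsilon,t]$ is made arbitrarily small in norm uniformly in $n$ by the growth bounds. Arzel\`a-Ascoli then yields a subsequence $x_{n_k}\to y$ uniformly on $I$.

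Next, I would pass to the limit in the integral equation for $x_{n_k}$ to identify $y$ with $S(u)$, then invoke uniqueness. The Lipschitz hypotheses (H1.2) and (H2.2), together with the uniform convergence $x_{n_k}\to y$, let me replace $x_{n_k}(s)$ by $y(s)$ inside $f$ and $h$ at the cost of a vanishing error, reducing the matter to convergence of the Duhamel and nonlocal terms viewed as functions of $u_{i,n_k}$ only. This is where the main obstacle lies, since $u_{i,n_k}\to u_i$ only weakly. The key observation is that compactness of $T_\alpha(t-s)$ for $t-s>0$ turns weak convergence into strong convergence: the map $u\mapsto\int_0^t(t-s)^{\alpha-1}T_\alpha(t-s)L^{-1}B_i(s)u(s)\,ds$ is a compact linear operator from $L^{1/\beta}(I,T)$ to $X$, hence sends weakly convergent sequences to norm-convergent ones. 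The Lipschitz dependence of $f$ on its last $r-1$ arguments then lets me transfer this weak-to-strong conversion through the nonlinearity via a standard Mazur/Vitali argument, and the already-established equicontinuity upgrades pointwise convergence to uniform convergence on $I$. Once $y$ is identified as the mild solution corresponding to $u$, uniqueness (from the Lipschitz hypotheses combined with the singular Gronwall inequality of Lemma~\ref{Lemma 2.3}) forces $y=x$; since every subsequence of $\{x_n\}$ has a sub-subsequence converging to $x$, the full sequence converges in $C(I,X)$.
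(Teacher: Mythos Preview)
Your overall architecture coincides with the paper's: both proofs establish relative compactness of the trajectory family via an Arzel\`a--Ascoli argument (uniform bounds from Lemma~\ref{Lemma 3.1}, equicontinuity from the mild-solution representation together with the continuity of $S_\alpha,T_\alpha$, and pointwise relative compactness from the compactness of $T_\alpha(t)$ for $t>0$), then extract a strongly convergent subsequence, identify its limit, and finish with a sub-subsequence argument. The paper packages the three Arzel\`a--Ascoli steps as ``complete continuity'' of an auxiliary operator $H$ attached to problem \eqref{eq:3.3}--\eqref{eq:3.4}; you work directly with the nonlinear mild-solution formula, but the estimates you invoke are the very ones the paper writes out in its Steps~1--3.

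The genuine gap is in your identification of the limit. You correctly note that the \emph{linear} map $u\mapsto\int_0^t(t-s)^{\alpha-1}T_\alpha(t-s)L^{-1}B_i(s)u(s)\,ds$ is compact and hence upgrades weak convergence of $u_{i,n_k}$ to norm convergence of the image. But you then claim the Lipschitz dependence of $f$ lets you ``transfer this weak-to-strong conversion through the nonlinearity via a standard Mazur/Vitali argument,'' and this step does not go through: a Lipschitz but nonlinear Nemytskii operator $u\mapsto f(\cdot,y(\cdot),B_1(\cdot)u_1(\cdot),\ldots)$ need not send weakly convergent sequences in $L^{1/\beta}(I,T)$ to weakly convergent sequences in $L^{1/\beta}(I,Y)$, so composing with a compact integral operator yields relative compactness of the outputs but gives you no way to identify the limit as the image of the weak limit. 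The situation is worse for the nonlocal term $h(y,B_r u_{r,n_k})$: there is no smoothing integral in front of it, so your compactness mechanism does not apply at all, and (H2.2) only bounds the discrepancy by $\|u_{r,n_k}-u_r\|$, which need not vanish under weak convergence. The paper's route at this point is different: it regards the data $(f_n,h_n)$ themselves as converging weakly and uses that the operator $H$ is simultaneously weak-to-weak continuous and completely continuous, so that $H(f_n,h_n)\to H(f,h)$ strongly in $C(I,X)$. Your proposal is missing the analogue of this identification step.
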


\begin{proof}
The operator $H: L^{1/\beta}(I\times X, Y)\times L^{1/\beta}(I:X, X)\to C(I,X)$
is linear. The estimation
$$
\|H(f,h)\|_C\leq C_{2}M_0\Vert M\Vert\Vert h\Vert_{L^{\frac{1}{\beta}}}
+\frac{C_{1}C_{2}M_0}{\Gamma(\alpha)}\left[\frac{(1-\beta)}{\alpha-\beta}
a^{\frac{\alpha-\beta}{1-\beta}}\right]^{1-\beta}
\Vert f\Vert_{L^{\frac{1}{\beta}}}
$$
shows that $H$ is continuous. Hence, $H$ is also continuous from
$\omega$-$L^{1/\beta}(I\times X, Y)\times L^{1/\beta}(I:X, X)$ to
$\omega$-$C(I,X)$.
Let $C\in P_{b}(L^{1/\beta}(J,X))$ and suppose that for any
$f,h\in C$, $\|f\|_{L^{1/\beta}(I\times X, Y)}\leq K_{1}$
and $\|h\|_{L^{1/\beta}(I:X, X)}\leq K_{2}$ ($K_{1}, K_{2}>0$).
Next we will show that $H$ is completely continuous.

\noindent \textit{Step 1.} From Lemma~\ref{Lemma 3.1},
 we have that the map $\|H(f,h)(t)\|_X$ is
uniformly bounded.

\noindent \textit{Step 2.} $H$ is equicontinuous on $C$.
Let $0\leq t_1<t_2\leq a$.
For any $f,h\in C$, we obtain
\begin{align*}
\|H(f,h)(t_2)-H(f,h)(t_1)\|_X
&\leq \left\Vert [S_{\alpha}(t_{2})-S_{\alpha}(t_{1})]M h(x)\right\Vert_{X}\\
&+\left\Vert\int_{t_1}^{t_2}(t_2-s)^{\alpha-1}T_{\alpha}(t_2-s)L^{-1}f(s, x(s))ds
\right\Vert_X \\
&+\left\Vert\int_0^{t_1}\Big[(t_2-s)^{\alpha-1}-(t_1-s)^{\alpha-1}\Big]
 T_{\alpha}(t_2-s)L^{-1}f(s, x(s))ds\right\Vert_X  \\
&+\left\Vert\int_0^{t_1}(t_1-s)^{\alpha-1}\Big[T_{\alpha}(t_2-s)
 -T_{\alpha}(t_1-s)\Big]L^{-1}f(s, x(s))ds\right\Vert_X\\
&=: I_1+I_2+I_3+I_4.
\end{align*}
By using analogous arguments as in Lemma~\ref{Lemma 3.1}, we have
\begin{align*}
I_1&\leq K_{2}\Vert M\Vert \sup\|S_{\alpha}(t_2)-S_{\alpha}(t_1)\|,\\
I_2&\leq \frac{C_1C_2M_0K_{1}}{\Gamma(\alpha)}
\Big[\frac{1-\beta}{\alpha-\beta}\Big]^{1-\beta}(t_2-t_1)^{\alpha-\beta}, \\
I_3&\leq \frac{C_1C_2M_0K_{1}}{\Gamma(\alpha)}\Big(\int_0^{t_1}
\big((t_1-s)^{\alpha-1}-(t_2-s)^{\alpha-1}\big)^{1/(1-\beta)}ds\Big)^{1-\beta}\\
&\leq \frac{C_1C_2M_0K_{1}}{\Gamma(\alpha)}\Big(\int_0^{t_1}
\big((t_1-s)^{\frac{\alpha-1}{1-\beta}}-(t_2-s)^{\frac{\alpha-1}{1-\beta}}\big)ds\Big)^{1-\beta}\\
&= \frac{C_1C_2M_0K_{1}}{\Gamma(\alpha)}
\Big[\frac{1-\beta}{\alpha-\beta}\Big]^{1-\beta}
\Big(t_1^{\frac{\alpha-\beta}{1-\beta}}
-t_2^{\frac{\alpha-\beta}{1-\beta}}+(t_2-t_1)^{\frac{\alpha-\beta}{1-\beta}}
\Big)^{1-\beta}\\
&\leq \frac{C_1C_2M_0K_{1}}{\Gamma(\alpha)}
\Big[\frac{1-\beta}{\alpha-\beta}\Big]^{1-\beta}
\big(t_2-t_1\big)^{\alpha-\beta}.
\end{align*}
For $t_1=0$ and $0<t_2\leq b$, it is easy to see that $I_4=0$. For $t_1>0$ and
$\epsilon>0$ small enough,
\begin{align*}
I_4 &\leq \left\Vert\int_0^{t_1-\epsilon}(t_1-s)^{\alpha-1}\Big(T_{\alpha}(t_2-s)
-T_{\alpha}(t_1-s)\Big)L^{-1}f(s, x(s))ds\right\Vert_X \\
&\quad +\left\Vert\int_{t_1-\epsilon}^{t_1}(t_1-s)^{\alpha-1}\Big(T_{\alpha}(t_2-s)
-T_{\alpha}(t_1-s)\Big)L^{-1}f(s, x(s))ds\right\Vert_X \\
&\leq \sup_{s\in[0,t_1-\epsilon]}\|T_{\alpha}(t_2-s)
-T_{\alpha}(t_1-s)\|C_1 K_{1}\Big[\frac{1-\beta}{\alpha-\beta}\Big]^{1-\beta}
\left(t_1^{\frac{\alpha-\beta}{1-\beta}}-\epsilon^{\frac{\alpha-\beta}{1-\beta}}\right)^{1-\beta}\\
&\quad +\frac{2C_1C_2M_0K_{1}}{\Gamma(\alpha)}
\Big[\frac{1-\beta}{\alpha-\beta}\Big]^{1-\beta}\epsilon^{\alpha-\beta}.
\end{align*}
Combining the estimations for $I_1$, $I_2$, $I_3$ and $I_4$, and letting
$t_2\to t_1$ and $\epsilon\to 0$ in $I_4$, we conclude that $H$ is equicontinuous.
For more details see \cite{AMA.46}.

\noindent \textit{Step 3.} The set $\Pi(t):=\{H(f,h)(t): f,h\in C\}$ is relatively compact in $X$.
Clearly, $\Pi(0)$ is compact. Hence, it is only necessary to
consider $t>0$. For each $g \in ]0,t[$, $t\in ]0,a]$, $f,h\in C$, and $\delta>0$
being arbitrary, we define
$\Pi_{g,\delta}(t):=\{H_{g,\delta}(f,h)(t): f,h\in C\}$, where
\begin{align*}
H_{g,\delta}(f,h)(t)
&=\int_{\delta}^{\infty}M^{-1}\xi_{\alpha}(\theta)Q(t^{\alpha}\theta) Mh(x)d\theta\\
&\quad + \alpha\int_0^{t-g}\int_{\delta}^{\infty}\theta(t-s)^{\alpha-1}\xi_{\alpha}
(\theta) Q((t-s)^{\alpha}\theta)L^{-1}f(s, x(s))d\theta ds \\
&=Q(g^{\alpha}\delta)\int_{\delta}^{\infty}M^{-1}\xi_{\alpha}(\theta)
Q(t^{\alpha}\theta-g^{\alpha}\delta)Mh(x)d\theta\\
&\quad +\alpha Q(g^{\alpha}\delta)\int_0^{t-g}\int_{\delta}^{\infty}
\theta(t-s)^{\alpha-1}\xi_{\alpha}(\theta)
Q\big((t-s)^{\alpha}\theta-g^{\alpha}\delta\big)L^{-1}f(s, x(s))d\theta ds\\
&:=Q(g^{\alpha}\delta)y(t, g).
\end{align*}
Because $Q(g^{\alpha}\delta)$ is compact and $y(t, g)$ is bounded, we obtain
that the set $\Pi_{g,\delta}(t)$ is relatively compact in $X$ for any
$g \in ]0,t[$ and $\delta>0$. Moreover, we have
\begin{align*}
&\|H(f,h)(t)-H_{g,\delta}(f,h)(t)\|_X=\biggl\Vert
\int_{0}^{\delta}M^{-1}\xi_{\alpha}(\theta)Q(t^{\alpha}\theta)Mh(x)d\theta\\
&\quad +\alpha\int_0^t\int_0^{\delta}M^{-1}\theta(t-s)^{\alpha-1}
\xi_{\alpha}(\theta) Q((t-s)^{\alpha}\theta)L^{-1}f(s, x(s))d\theta ds \\
&\quad +\alpha\int_0^t\int_{\delta}^{\infty}M^{-1}\theta(t-s)^{\alpha-1}\xi_{\alpha}(\theta)
 Q((t-s)^{\alpha}\theta)L^{-1}f(s, x(s))d\theta ds \\
&\quad -\alpha\int_0^{t-g}\int_{\delta}^{\infty}M^{-1}\theta(t-s)^{\alpha-1}
\xi_{\alpha}(\theta) Q((t-s)^{\alpha}\theta)L^{-1}f(s, x(s))d\theta ds\biggr\Vert_X\\
&\leq \left\Vert\int_{0}^{\delta}M^{-1}\xi_{\alpha}(\theta)Q(t^{\alpha}\theta)Mh(x)d\theta\right\Vert_{X} \\
&\quad +\alpha\left\Vert\int_0^t\int_0^{\delta}M^{-1}\theta(t-s)^{\alpha-1}\xi_{\alpha}
 (\theta) Q((t-s)^{\alpha}\theta)L^{-1}f(s, x(s))d\theta ds\right\Vert_X \\
&\quad +\alpha\left\Vert\int_{t-g}^t\int_{\delta}^{\infty}M^{-1}\theta(t-s)^{\alpha-1}
\xi_{\alpha}(\theta) Q((t-s)^{\alpha}\theta)L^{-1}f(s, x(s))d\theta ds\right\Vert_X\\
&\leq C_2M_{0}\int_{0}^{\delta}\xi_{\alpha}(\theta)d\theta
\Vert M\Vert\|h(x)\|_{L^{1/\beta}}\\
&\quad +C_1C_2M_0\alpha\left(\int_0^t(t-s)^{\frac{\alpha-1}{1-\beta}}ds\right)^{1-\beta}
\|f\|_{L^{1/\beta}} \int_0^{\delta}\theta\xi_{\alpha}(\theta)d\theta \\
&\quad +C_1C_2M_0\alpha\left(\int_{t-g}^t(t-s)^{\frac{\alpha-1}{1-\beta}}ds
\right)^{1-\beta}\|f\|_{L^{1/\beta}}
\int_{\delta}^{\infty}\theta\xi_{\alpha}(\theta)d\theta\\
&\leq C_2M_{0}\Vert M\Vert K_2\int_{0}^{\delta}\xi_{\alpha}(\theta)d\theta \\
&\quad +C_1C_2M_0K_{1}\alpha\left[\frac{1-\beta}{\alpha-\beta}\right]^{1-\beta}
\left(b^{\alpha-\beta}\int_0^{\delta}\theta\xi_{\alpha}(\theta)d\theta
 +\frac{1}{\Gamma(1+\alpha)}g^{\alpha-\beta}\right).
\end{align*}
From Definition~\ref{Definition 2.5} and Remark~\ref{Remark 2.2},
we deduce that the right-hand side of the last inequality tends to zero as
$g\to 0$ and $\delta\to 0$. Therefore, there are relatively
compact sets arbitrarily close to the set $\Pi(t)$, $t>0$. Hence, the set
$\Pi(t)$, $t>0$ is also relatively compact in $X$.

Since $T_{\varphi}$ is a convex compact metrizable subset of
$\omega$-$L^{1/\beta}(I, T)$, it suffices to prove the sequential continuity
of the map $S$. Let $\{u_{1,n}\}_{n\geq1},\ldots,\{u_{r,n}\}_{n\geq1}\subseteq T_{\varphi}$
be such that
\begin{equation}
\label{eq:3.5}
(u_{1,n},\ldots,u_{r,n})\to (u_1,\ldots,u_r)~
\text{in}~\omega-L^{1/\beta}(I, T),
~~ u_1,\ldots,u_r\in T_{\varphi}.
\end{equation}
Set $f_n:=f_n\left(\cdot,\cdot, B_1(\cdot)u_{1,n}(\cdot),\ldots,B_{r-1}(\cdot)u_{r-1,n}(\cdot)\right)$
and $h_n:=h_n\left(\cdot,B_{r}(\cdot)u_{r,n}(\cdot)\right)$. By the properties of the operator $H$
together with \eqref{eq:3.5}, we have
$H(f_n, h_n)\to H(f ,h)~\text{in}~ \omega-C(I, X)$,
where $(f_n, h_n)\to (f ,h)$ in $\omega-L^{1/\beta}(I\times X^{r}, Y)\times L^{1/\beta}(X^{2}, X)$, and
the limit functions are $f=f\left(\cdot,\cdot, B_1(\cdot)u_{1}(\cdot),\ldots,B_{r-1}(\cdot)u_{r-1}(\cdot)\right)$
and $h=h(\cdot,B_{r}(\cdot)u_{r}(\cdot))$.
Since $\{f_n\}_{n\geq1}$ and $\{h_n\}_{n\geq1}$ are bounded, there are
subsequences $\{f_{n_k}\}_{k\geq1}$ and $\{h_{n_k}\}_{k\geq1}$ of the sequences
$\{f_n\}_{n\geq1}$ and $\{f_n\}_{n\geq1}$, respectively, such that
$H(f_{n_k}, h_{n_k})\to z$ in $C(I,X)$ for some $z\in C(I,X)$. From
the fact that $H(f_n, h_n)\to H(f, h)~\text{in }\omega\text{-}C(I,X)$
and $H(f_{n_k}, h_{n_k})\to z~\text{in}~ C(I,X)$,
we obtain that $z=H(f, h)$ and $H(f_n, h_n)\to H(f, h)$ in $C(I,X)$.
Based on the definitions of operators $S$ and $H$, we know that
$S(u_1,\ldots,u_r)(t)=S_{\alpha}(t)Mx_0+H(f, h)(t)$.
According to the arguments above, we conclude that
$S\left(u_{1,n},\ldots,u_{r,n}\right)(t)\to S(u_1,\ldots,u_r)(t)$ in $C(I, X)$.
\hfill $\Box$
\end{proof}

Now, we consider the space $\overline{T}:=T\times \mathbb{R}$.
The elements of the space $\overline{T}$ will be denoted by
$\overline{u}_i:=(u_i, \tau_i)$, such that $u_i\in T$, $\tau_i\in \mathbb{R}$
and $i=1,\ldots,r$. The space $\overline{T}$ is endowed with the norm
$\Vert \overline{u}\Vert_{\overline{T}}=\max\lbrace\max(\Vert u_1\Vert_T, \vert \tau_1\vert),
\ldots,\max(\Vert u_r\Vert_T, \vert \tau_r\vert)\rbrace$. Then $\overline{T}$
is a separable reflexive Banach space. In view of \eqref{eq:2.2},
the norm on the space $L^{q}_{\omega}(I, {\overline{T}})$ becomes
\begin{multline*}
\Vert {\overline{u}}\Vert_\omega
=\sup\limits_{0\leq t_1\leq t_2\leq a}\Biggl\lbrace
\max\biggl\lbrace\max\left(\left\Vert\int_{t_1}^{t_2}u_1(s)ds\right\Vert_T,
\left\vert\int_{t_1}^{t_2}\tau_{1}(s)ds\right\vert\right),\\
\ldots, \max\left(\left\Vert\int_{t_1}^{t_2}u_r(s)ds\right\Vert_T,
\left\vert\int_{t_1}^{t_2}\tau_{r}(s)ds\right\vert\right)\biggr\rbrace\Biggr\rbrace.
\end{multline*}
Let the multivalued map $F: I\times X \rightrightarrows \overline{T}$ be defined by
\begin{equation}
\label{eq:F}
F(t, x)
:=\left\{(u_i, \tau_i)\in \overline{T}\Biggl\vert
\begin{array}{lll}
u_{1}\in U(t, x), \tau_1=g_{1}(t, x, u_{1})\\
\qquad \vdots\\
u_{r}\in U(t, x), \tau_r=g_{r}(t, x, u_{r})
\end{array} \right\}.
\end{equation}

\begin{lemma}
\label{Lemma 3.3}
The multivalued map $F$ given by \eqref{eq:F}
has bounded closed values and satisfies:
\begin{itemize}
\item[(1)] the map $t\to F(t, x)$ is measurable;
\item[(2)] $d_H(F(t, x), F(t, y))\leq l(t)\Vert x-y\Vert_X$ a.e., with $l\in L^{1}(I, \mathbb{R}^+)$;
\item[(3)] for any $\overline{u}_i=(u_i, \tau_i)\in F(t, x)$, we have
$\vert\tau_i\vert\leq a_4(t)+b_4(t)\Vert x\Vert_X+c_4(a_3(t)+c_3\Vert x\Vert_X)$
and $\Vert u_i\Vert_T\leq a_3(t)+c_3\Vert x\Vert_X$, $i=1,\ldots,r$.
\end{itemize}
\end{lemma}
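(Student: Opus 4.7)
The plan is to exploit the factorization $F(t,x) = \prod_{i=1}^{r}\Gamma_{i}(t,x)$, where $\Gamma_{i}(t,x) := \{(u, g_{i}(t,x,u)) : u \in U(t,x)\} \subset T \times \mathbb{R}$ is the graph of $g_{i}(t,x,\cdot)$ over $U(t,x)$, and then verify the four items in turn using (H3) on $U$ and (H4) on $g_{i}$.

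For bounded closed values and for claim (3), the argument is essentially algebraic. Each $\Gamma_{i}(t,x)$ is the graph, over the closed set $U(t,x)$, of a function continuous in $u$ (by the Lipschitz inequality (H4.2)), so $\Gamma_{i}(t,x)$ is closed in $T \times \mathbb{R}$. Boundedness and the estimates of claim (3) follow at once by substituting the growth bound (H3.3) for $\|u_{i}\|_{T}$ into the growth bound (H4.3) for $|g_{i}(t,x,u_{i})|$.

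For the measurability in $t$ (claim 1), I would invoke a Castaing representation: (H3.1) supplies a countable family of measurable selectors $\sigma_{n}(\cdot)$ with $U(t,x) = \overline{\{\sigma_{n}(t)\}_{n\ge1}}$; since $g_{i}$ is Carath\'eodory, $t \mapsto (\sigma_{n}(t), g_{i}(t,x,\sigma_{n}(t)))$ is measurable, and by continuity of $g_{i}(t,x,\cdot)$ these points remain dense in $\Gamma_{i}(t,x)$. Hence each $\Gamma_{i}(\cdot,x)$ is measurable, and so is the finite product $F(\cdot,x)$.

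The principal step is claim (2), the Lipschitz estimate. I would fix $t$, $x$, $y$, pick an arbitrary element $(\bar{u}_{1},\ldots,\bar{u}_{r}) \in F(t,x)$ with $\bar{u}_{i} = (u_{i}, g_{i}(t,x,u_{i}))$, and for each $i$ use (H3.2) together with an $\varepsilon$-approximation of the Hausdorff distance to choose $u_{i}' \in U(t,y)$ satisfying $\|u_{i} - u_{i}'\|_{T} \le k_{3}(t)\|x-y\|_{X} + \varepsilon$. Then (H4.2) yields
$$
|g_{i}(t,x,u_{i}) - g_{i}(t,y,u_{i}')| \le k_{4}'(t)\|x-y\|_{X} + k_{4}''\bigl(k_{3}(t)\|x-y\|_{X} + \varepsilon\bigr).
$$
Taking the maximum over $i$, interchanging the roles of $x$ and $y$, and letting $\varepsilon \downarrow 0$ gives $d_{H}(F(t,x), F(t,y)) \le l(t)\|x-y\|_{X}$ with $l(t) := k_{3}(t) + k_{4}'(t) + k_{4}''k_{3}(t)$, which lies in $L^{1}(I,\mathbb{R}^{+})$ since $k_{3} \in L^{\infty}$ and $k_{4}' \in L^{1}$.

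The only delicate point is precisely this $\varepsilon$-selection: because $T$ is infinite-dimensional and $U(t,y)$ is merely closed and bounded (not necessarily compact in norm), the nearest point to $u_{i}$ in $U(t,y)$ need not be attained, so one cannot choose $u_{i}'$ realizing the distance exactly and must pass to the limit $\varepsilon \to 0$ after the triangle-type inequality. All other pieces are direct substitutions from the hypotheses.
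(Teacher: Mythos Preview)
Your argument is correct and follows essentially the same route as the paper: both verify boundedness and item~(3) directly from (H3.3)/(H4.3), obtain closedness from the continuity of $g_{i}(t,x,\cdot)$ over the closed set $U(t,x)$, and prove the Hausdorff--Lipschitz estimate by the same $\varepsilon$-approximate selection $u_i' \in U(t,y)$ combined with (H4.2), then letting $\varepsilon\to 0$. The differences are cosmetic: the paper simply cites references for measurability where you spell out a Castaing-representation argument, and the paper records the slightly sharper constant $l(t)=\max\{k_3(t),\,k_4'(t)+k_4''k_3(t)\}$ (using the max-norm on $T\times\mathbb{R}$) instead of your sum $k_3(t)+k_4'(t)+k_4''k_3(t)$; both lie in $L^{1}(I,\mathbb{R}^{+})$.
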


\begin{proof}
From (H3.3) and (H4.3), we have the boundedness of $F(t, x)$. Moreover, item (3) also follows.
Since the graphs of functions $u_1\to g_1(t, x, u_1),\ldots,u_r\to g_r(t, x, u_r)$ are closed
on the set $U(t, x)$, we obtain the closedness of $F(t, x)$. The measurability of the
multivalued map is concluded and extended from \cite{AMA.20,AMA.40}. To prove item (2),
we consider $x, y\in X$, such that $x\neq y$, and any arbitrary $\epsilon_i >0$,
$i=1,\ldots,r$. Then, for each $u_i\in U(t, x)$, there exists $v_i\in U(t, y)$ satisfying
$\Vert u_i-v_i\Vert_T$ $\leq (k_3(t)+\epsilon_i)\Vert x-y\Vert_X$ and
$\vert g_i(t, x, u_i)-g_i(t, y, v_i)\vert\leq k_4(t)
\Vert x-y\Vert_X+k_{4}^{\prime}((k_3(t)+\epsilon_i)\Vert x-y\Vert_X)$.
From above, we get
$$
\max \Biggl\lbrace \sup d\biggl((u_1,g_1(t,x,u_1)), F(t,y)\biggr),
\ldots, \sup d\biggl((u_r,g_r(t,x,u_r)),
F(t,y)\biggr)\Biggr\rbrace\leq l(t)\Vert x-y\Vert_X,
$$
where $l(t):=\max \lbrace k_3(t), k_4(t)+k_{4}^{\prime}k_3\rbrace$.
Similarly, we can get
$$
\max \Biggl\lbrace \sup d\biggl((v_1,g_1(t,y,v_1)), F(t,x)\biggr),
\ldots, \sup d\biggl((v_r,g_r(t,y,v_r)),
F(t,x)\biggr)\Biggr\rbrace\leq l(t)\Vert x-y\Vert_X.
$$
We apply $\max$ between the two last $\max$-sets, to get our result.
\hfill $\Box$
\end{proof}

Let $\text{Eff} g^{**}(t, x)$ be the effective set, and $\text{Epi} g^{**}(t, x)$
the epigraph of functions $u_1\to g_{1}^{**}(t, x, u_{1})$,
$\ldots$, $u_r\to g_{r}^{**}(t, x, u_{r})$, that is,
\begin{itemize}
\item[(1)] $\text{Eff} g^{**}(t, x):=\lbrace u_i\in T: \max\lbrace
g_{1}^{**}(t, x, u_1),\ldots, g_{r}^{**}(t, x, u_r)\rbrace<+\infty\rbrace$,

\item[(2)] $\text{Epi} g^{**}(t, x):=\lbrace (u_i, \tau_i)\in \overline{T}: g_{1}^{**}(t, x, u_{1})
\leq \tau_1,\ldots,g_{r}^{**}(t, x, u_{r})\leq \tau_r\rbrace$.
\end{itemize}
Now, we present some properties of functions
$g_{i}^{**}(t, x, u_{i})$ via the following lemma.

\begin{lemma}
\label{Lemma 3.4}
For a.e. in $t\in I$, one has:
\begin{itemize}
\item[(1)] $\text{Eff} g^{**}(t, x)=\operatorname{cl} \operatorname{conv} U(t,x(t))$;

\item[(2)] $g_{1}^{**}(t, x, u_{1})+\cdots+g_{r}^{**}(t, x, u_{r})
=\min \lbrace \tau_{1}+\cdots+\tau_{r}\in\mathbb{R}:
(u_i,\tau_i)\in \operatorname{cl} \operatorname{conv} F(t, x)\rbrace$
for every $u_1,\ldots,u_r\in \text{Eff} g^{**}(t, x)$ and hence
$(u_i,g_{i}^{**}(t, x, u_{i}))\in \operatorname{cl} \operatorname{conv} F(t, x)$
when $u_i\in \operatorname{cl} \operatorname{conv} U(t, x)$ and $x\in X$;

\item[(3)] for any $\epsilon_i>0$, there exist closed sets $I_{\epsilon_{i}}\subseteq I,
\mu(I\setminus I_{\epsilon_{i}})\leq \epsilon_i$, such that
$(t, x, u_i)\to g_{i}^{**}(t, x, u_{i})$ are l.s.c. on $I_{\epsilon_{i}}\times X\times T$,
$i=1,\ldots,r$.
\end{itemize}
\end{lemma}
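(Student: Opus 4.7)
The plan is to treat each of the three assertions separately, using standard convex-analytic facts about the bipolar (Fenchel biconjugate) for (1) and (2), and a Scorza--Dragoni type argument for the joint semicontinuity in (3). Throughout, we fix $(t,x)$ and consider the fibrewise functions $u_i \mapsto g_{i,U}(t,x,u_i)$, each of which is proper with effective domain $U(t,x)$.

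For part (1), by (H3.3) and (H4.3) each $u_i \mapsto g_{i,U}(t,x,u_i)$ is bounded on its effective domain $U(t,x)$, hence admits a continuous affine minorant. The bipolar $g_i^{**}(t,x,\cdot)$ is therefore a proper closed convex function on $T$, and by the classical characterisation of the effective domain of the $\Gamma$-regularisation one has $\operatorname{dom} g_i^{**}(t,x,\cdot) = \operatorname{cl}\operatorname{conv} U(t,x)$. Since this set is independent of $i$, intersecting the $r$ identical effective domains yields the statement of (1).

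For part (2), I will use the epigraphical identity $\operatorname{epi}(g_i^{**}(t,x,\cdot)) = \operatorname{cl}\operatorname{conv} \operatorname{epi}(g_{i,U}(t,x,\cdot))$. Since $\operatorname{epi} g_{i,U}(t,x,\cdot)$ equals the graph $\{(u,g_i(t,x,u)) : u \in U(t,x)\}$ augmented by the positive ray $\{0\} \times [0,\infty)$, and this graph coincides with the $i$-th component of $F(t,x)$ from \eqref{eq:F}, a direct comparison using the uniform bound \eqref{eq:3.2} and (H4.3) gives
\[
g_i^{**}(t,x,u_i) = \min\bigl\{\tau_i : (u_i,\tau_i) \in \operatorname{cl}\operatorname{conv} F(t,x)\bigr\},
\]
with the minimum attained. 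Summing over $i=1,\ldots,r$ and exchanging the $r$ independent minimisations with the sum produces the formula in (2); the attainment statement $(u_i, g_i^{**}(t,x,u_i)) \in \operatorname{cl}\operatorname{conv} F(t,x)$ is then immediate.

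For part (3), I appeal to the Scorza--Dragoni theorem: by (H3.1)--(H3.2) the multifunction $U$ is Carath\'eodory, and by (H4.1)--(H4.2) each $g_i$ is a Carath\'eodory integrand, so for any $\epsilon_i > 0$ there is a closed $I_{\epsilon_i} \subseteq I$ with $\mu(I \setminus I_{\epsilon_i}) \leq \epsilon_i$ on which both $U$ and $g_i$ are jointly continuous. Hence $g_{i,U}$ is jointly lower semicontinuous on $I_{\epsilon_i} \times X \times T$, the indicator of a closed-graph multifunction being l.s.c. To propagate this through the bipolar, write
\[
g_i^{**}(t,x,u_i) = \sup_{u^* \in T^*} \bigl[\langle u^*, u_i\rangle - g_{i,U}^{*}(t,x,u^*)\bigr],
\]
where $g_{i,U}^{*}$ is the Fenchel conjugate in the $u_i$-variable; each summand is l.s.c.\ in $(t,x,u_i)$ because $g_{i,U}^{*}(t,x,u^*)$ is upper semicontinuous in $(t,x)$, and the pointwise supremum of l.s.c.\ functions is l.s.c. \textbf{The main obstacle} will be this last propagation step: Fenchel conjugation generically destroys continuity in parameters, and to make $g_{i,U}^{*}$ upper semicontinuous in $(t,x)$ one must restrict $u_i$ to the uniformly bounded set given by $\varphi$ in \eqref{eq:3.2} and exploit the growth condition (H4.3) to control the supremum defining the conjugate.
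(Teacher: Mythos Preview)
Your treatment of parts (1) and (2) is essentially the same as the paper's: both arguments rest on the fact that each $u_i\mapsto g_{i,U}(t,x,u_i)$ has an affine minorant (from (H3.3) and (H4.3)), so that the bipolar coincides with the $\Gamma$-regularisation and the epigraph identity $\operatorname{epi}g_i^{**}=\operatorname{cl}\operatorname{conv}\operatorname{epi}g_{i,U}$ yields both the effective domain and the minimum formula.

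For part (3), however, your route diverges from the paper's and the obstacle you flag is real. Passing l.s.c.\ through the Fenchel conjugate requires the parameter map $(t,x)\mapsto g_{i,U}^*(t,x,u^*)=\sup_{u\in U(t,x)}[\langle u^*,u\rangle-g_i(t,x,u)]$ to be u.s.c.; but $U(t,x)$ is only closed and bounded in the reflexive space $T$, not norm-compact, so Hausdorff continuity of $U$ and continuity of $g_i$ do not by themselves give upper semicontinuity of this supremum. Restricting to the $\varphi$-ball does not resolve this, since the supremum is still over a merely weakly compact set and the integrand is not weakly continuous.

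The paper bypasses this difficulty entirely. Instead of working with $g_{i,U}$ and conjugating, it applies the Scorza--Dragoni theorem (Tolstonogov's version for multifunctions) directly to the graph multifunction $F(t,x)$ of Lemma~\ref{Lemma 3.3}: since $F$ is measurable in $t$ and Hausdorff-Lipschitz in $x$, there exist closed $I_{\epsilon_i}$ on which $(t,x)\mapsto\operatorname{cl}\operatorname{conv}F(t,x)$ has closed graph in $I\times X\times\overline{T}$. Lower semicontinuity of $g_i^{**}$ then follows by a one-line sequential argument using part~(2): if $(t_n,x_n,u_{i,n})\to(t,x,u_i)$ and $g_i^{**}(t_n,x_n,u_{i,n})\to\lambda_i<+\infty$, then $(u_{i,n},g_i^{**}(t_n,x_n,u_{i,n}))\in\operatorname{cl}\operatorname{conv}F(t_n,x_n)$ by (2), the closed-graph property forces $(u_i,\lambda_i)\in\operatorname{cl}\operatorname{conv}F(t,x)$, and the minimum formula in (2) gives $g_i^{**}(t,x,u_i)\leq\lambda_i$. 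This avoids any appeal to stability of Fenchel conjugates under parameters.
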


\begin{proof}
It is well known that the bipolar $g_{1}^{**}(t, x, u_{1}),\ldots,g_{r}^{**}(t, x, u_{r})$
are the $\Gamma$-regularization of $u_1\to g_{1,U}(t, x, u_1),\ldots,u_r\to g_{r,U}(t, x, u_r)$,
respectively. Let $x\in I$ a.e. be arbitrary. By (H4.3), each function of
$u_1\to g_{1,U}(t, x, u_1),\ldots,u_r\to g_{r,U}(t, x, u_r)$
has an affine continuous minorant. Then,
\begin{equation}
\label{eq:3.6}
\text{Epi} g^{**}(t, x)=\operatorname{cl} \operatorname{conv} \bigcup
\limits_{i=1}^{r}\text{Epi} g_{i,U}(t, x).
\end{equation}
Therefore, items (1) and (2) follow from \eqref{eq:3.6} and \eqref{eq:3.5}
(for more details see \cite{AMA.7}). Using Corollary~2.1 of \cite{AMA.40}
and items (1) and (2) of Lemma~\ref{Lemma 3.3},
for every $\epsilon_1,\ldots,\epsilon_r>0$, there are closed sets
$I_{\epsilon_{1}},\dots,I_{\epsilon_{r}}\subseteq I$ with
$\mu(I\setminus I_{\epsilon_{1}})\leq \epsilon_{1},\ldots,\mu(I\setminus I_{\epsilon_{r}})\leq \epsilon_r$
such that the map $\operatorname{cl} \operatorname{conv} F(t, x)$, restricted to $I_{\epsilon_{1}}\times X,\ldots,I_{\epsilon_{r}}\times X$,
has a closed graph in $I\times X\times \overline{T}$.
To show item (3), let us consider $(t_n, x_n, u_{i,n})_{n\geq1}\in I_{\epsilon_{i}}\times X\times T$,
such that $(t_n, x_n, u_{i,n})\to (t, x, u_{i})$.
If $\lim_{n\to \infty} g_{i}^{**}(t_n, x_n, u_{i,n})$ $=+\infty$,
then $g_{i}^{**}(t, x, u_{i})$ are l.s.c. at points $(t, x, u_i)$, $i=1,\ldots,r$.
If $\lim_{n\to \infty} g_{i}^{**}(t_n, x_n, u_{i,n})=\lambda_{i}$, with $\lambda_{i} \neq +\infty$,
then by using (H4.3), we get $\lambda_i\neq -\infty$. Hence, we can assume,
without loss of generality, that $g_{i}^{**}(t_n, x_n, u_{i,n})<+\infty$.
Then we have $(u_{i,n}, g_{i}^{**}(t_n, x_n, u_{i,n}))$
$\in \operatorname{cl} \operatorname{conv} F(t_n, x_n)$.
From the last formula, and based on the above, the map $\operatorname{cl} \operatorname{conv} F(t, x)$ restricted to
$I_{\epsilon_{1}}\times X,\ldots,I_{\epsilon_{r}}\times X$
has a closed graph in $I\times X\times \overline{T}$. We obtain that
$(u_i, \lambda_i)\in \operatorname{cl} \operatorname{conv} F(t, x)$. By the second item of this lemma,
we have $g_{i}^{**}(t, x, u_{i})\leq\lambda_i=\lim_{n\to \infty}g_{i}^{**}(t_n, x_n, u_{i,n}))$.
Consequently, the maps $(t, x, u_i)\to g_{i}^{**}(t, x, u_{i})$ are l.s.c.
on $I_{\epsilon_{i}}\times X\times T$.
\hfill $\Box$
\end{proof}


\section{Existence for Multiple Control Systems}
\label{sec:4}

In this section, we shall prove existence of solutions for the multiple control systems
\eqref{eq:1.1}--\eqref{eq:1.3} and \eqref{eq:1.1}--\eqref{eq:1.2},\eqref{eq:1.4}.
Let $\Lambda := S(T_\varphi)$. From Lemma~\ref{Lemma 3.2}, we
have that $\Lambda$ is a compact subset of $C(I,X)$.
It follows from \eqref{eq:3.2} and the definitions of $T_\varphi$ and $X_\varphi$
that $\mathcal{T}r_U\subseteq\mathcal{T}
r_{\operatorname{cl} \operatorname{conv} U}\subseteq\Lambda$.
Let the set-valued map $\overline{U}: C(I, X) \rightrightarrows 2^{L^{1/\beta}(I, T)}$ be defined by
\begin{equation*}
\overline{U}(x):=\left\{\theta_i: I\to T\text{ measurable}: \theta_i(t)\in U(t, x(t))
\text{ a.e.},~ i=1,\ldots,r\right\},
\quad x\in C(I, X).
\end{equation*}

\begin{theorem}
\label{Theorem 4.1}
The set $\mathcal{R}_U$ is nonempty and the set
$\mathcal{R}_{\operatorname{cl} \operatorname{conv} U}$ is a compact subset of the space
$C(J,X)\times\omega$-$L^{1/\beta}(I,T)$.
\end{theorem}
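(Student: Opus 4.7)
The plan is to split the theorem into three tasks: (a) $\mathcal{R}_{\operatorname{cl}\operatorname{conv} U}\neq\emptyset$; (b) $\mathcal{R}_U\neq\emptyset$; (c) $\mathcal{R}_{\operatorname{cl}\operatorname{conv} U}$ is compact in $C(I,X)\times(\omega\text{-}L^{1/\beta}(I,T))^r$. For (a), I would introduce the multifunction $N:C(I,X)\rightrightarrows L^{1/\beta}(I,T)^r$, $N(x):=\{(u_1,\dots,u_r): u_i \text{ measurable},\ u_i(t)\in \operatorname{cl}\operatorname{conv} U(t,x(t))\ \text{a.e.}\}$. Using (H3.1)--(H3.2) together with Castaing's selection theorem and the uniform bound \eqref{eq:3.2}, $N$ has nonempty convex values contained in $T_\varphi^r$. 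Composing with the solution map $S$ of Lemma~\ref{Lemma 3.2} gives a set-valued self-map $\Psi:=S\circ N$ of the compact convex set $\overline{\operatorname{conv}}\,\Lambda\subseteq C(I,X)$; continuity of $S$ from $\omega$-$T_\varphi^r$ to $C(I,X)$, combined with a standard Mazur-lemma argument to control weak limits of the $u_{i,n}$, makes $\Psi$ upper semicontinuous with nonempty closed convex values, so Kakutani--Ky Fan's fixed point theorem delivers an element of $\mathcal{R}_{\operatorname{cl}\operatorname{conv} U}$.

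For (b), the nonconvexity of $U(t,x)$ itself blocks the above argument. My plan is to apply the Bressan--Colombo continuous selection theorem: by (H3.1)--(H3.3), the multifunction $x\mapsto \{u\in L^{1/\beta}(I,T): u(t)\in U(t,x(t))\ \text{a.e.}\}$ from $C(I,X)$ to $L^{1/\beta}(I,T)$ has nonempty closed decomposable values and is lower semicontinuous (continuity in $x$ via (H3.2), measurability in $t$ via (H3.1)), so it admits a continuous selection. Choosing $r$ such selections $\sigma_1,\dots,\sigma_r$ and forming $\Phi(x):=S(\sigma_1(x),\dots,\sigma_r(x))$, which is a continuous self-map of the compact convex set $\overline{\operatorname{conv}}\,\Lambda$, Schauder's fixed point theorem produces a fixed point, hence an element of $\mathcal{R}_U$.

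For (c), by \eqref{eq:3.2} and Lemma~\ref{Lemma 3.2}, $\mathcal{R}_{\operatorname{cl}\operatorname{conv} U}\subseteq \Lambda\times T_\varphi^r$, where $\Lambda$ is compact in $C(I,X)$ and $T_\varphi^r$ is compact in $(\omega\text{-}L^{1/\beta}(I,T))^r$ by reflexivity of $T$ and Banach--Alaoglu. Hence it remains to show closedness. Given a sequence $(x_n,u_{1,n},\dots,u_{r,n})\in\mathcal{R}_{\operatorname{cl}\operatorname{conv} U}$ converging to $(x,u_1,\dots,u_r)$ in the product topology, Lemma~\ref{Lemma 3.2} (and in particular its proof, which controls $S_\alpha, T_\alpha$ under weak convergence of the controls) lets me pass to the limit in the mild-solution integral equation of Definition~\ref{Definition 2.5}, yielding the integral identity for $(x,u_1,\dots,u_r)$. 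The pointwise constraint $u_i(t)\in\operatorname{cl}\operatorname{conv} U(t,x(t))$ survives in the limit by Mazur's lemma applied to suitable convex combinations of the $u_{i,n}$, together with the Hausdorff continuity (H3.2) that gives $d_H\bigl(\operatorname{cl}\operatorname{conv} U(t,x_n(t)),\operatorname{cl}\operatorname{conv} U(t,x(t))\bigr)\to 0$ a.e. I expect step (b) to be the main obstacle: it relies essentially on the decomposability and lower-semicontinuity hypotheses needed to invoke the Bressan--Colombo selection theorem, whereas parts (a) and (c) reduce to fairly standard fixed-point and compactness/closedness arguments built on Lemma~\ref{Lemma 3.2}.
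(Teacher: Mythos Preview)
Your overall strategy matches the paper's closely, with two points worth noting.

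First, your step (a) is redundant: since $\mathcal{R}_U\subseteq\mathcal{R}_{\operatorname{cl}\operatorname{conv} U}$, nonemptiness of the latter follows from (b). More importantly, the Kakutani--Ky Fan argument you sketch for (a) has a genuine gap: the map $\Psi(x)=S(N(x))$ need not have \emph{convex} values, because the solution map $S$ is nonlinear (both $f$ and $h$ enter nonlinearly in the mild-solution formula). Convexity of $N(x)$ does not survive the push-forward by $S$, so Kakutani does not apply as stated. Since (a) is unnecessary, this does not damage the proof, but you should simply drop that step.

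Second, your step (b) is essentially the paper's argument: lower semicontinuity of $x\mapsto\{u:u(t)\in U(t,x(t))\ \text{a.e.}\}$ with closed decomposable values, then a continuous selection (the paper cites Tolstonogov--Tolstonogov rather than Bressan--Colombo, but the content is the same), then Schauder. The only real difference is where the fixed point is taken: you work in the trajectory space $\overline{\operatorname{conv}}\,\Lambda\subseteq C(I,X)$ with the map $\Phi=S\circ(\sigma_1,\dots,\sigma_r)$, whereas the paper works in the control space $\omega\text{-}T_\varphi$ with the map $\mathcal{P}=m\circ S$. Both are valid; the paper's choice is slightly cleaner because $T_\varphi$ is already convex and weakly compact, so no closed-convex-hull step is needed. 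Your step (c) coincides with the paper's (Mazur plus the $H$-continuity of $x\mapsto\operatorname{cl}\operatorname{conv} U(t,x)$, which the paper phrases via ``property $Q$'').
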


\begin{proof}
By hypotheses (H3.1) and (H3.2), we have that for any measurable
function $x: I\to X$, the map $t\to U(t, x(t))$ is measurable and has closed
values \cite[Proposition 2.7.9]{AMA.21}.
Therefore, it has measurable selectors \cite{AMA.20}.
So the operator $\overline{U}$ is well defined and its values are closed
decomposable subsets of $L^{1/\beta}(I, T)$. We claim that $x\to\overline{U}(x)$
is l.s.c. Let $x_* \in C(I, X)$, $\theta_{i,*}\in\overline{U}(x_*)$, $i=1,\ldots,r$,
and let $\{x_n\}_{n\geq1}\subseteq C(I, X)$ be a sequence converging to $x_*$.
It follows from \cite[Lemma 3.2]{AMA.48} that there are sequences
$\theta_{i,n}\in\overline{U}(x_n)$ such that
\begin{equation}
\label{eq:4.2}
\sum\limits_{i=1}^{r}\|\theta_{i,*}(t)-\theta_{i,n}(t)\|_T
\leq \sum\limits_{i=1}^{r}\biggl\lbrace d_T(\theta_{i,*}(t),
U(t,x_n(t)))+\frac{1}{in}\biggr\rbrace,
\quad \text{a.e. }t\in I.
\end{equation}
Since the map $y\to U(t,y)$ is $H$-continuous a.e. in $t\in I$ (by (H3.2)),
then a.e. in $t\in I$, the map $y\to U(t,y)$ is l.s.c.
\cite[Proposition 1.2.66]{AMA.21}. Hence,
each function $y\to d_T(\theta_{i,*}(t),U(t, y))$ is u.s.c. for a.e. $t\in I$.
It follows from \eqref{eq:4.2} that, a.e. in $t\in I$,
\begin{align*}
\lim_{n\to\infty}\sum\limits_{i=1}^{r}\|\theta_{i,*}(t)-\theta_{i,n}(t)\|_T
&\leq \lim_{n\to\infty}\sum\limits_{i=1}^{r}\sup d_T(\theta_{i,*}(t),U(t,x_n(t)))\\
&\leq  \sum\limits_{i=1}^{r}d_T(\theta_{i,*}(t),U(t,x_*(t)))=0.
\end{align*}
The last inequality together with \eqref{eq:3.2}
imply that $\theta_{i,n}\to \theta_{i,*}$ in
$L^{1/\beta}(I, T), ~i=1,\ldots,r$. Therefore,
the map $x\to\overline{U}(x)$ is l.s.c.
By \cite{AMA.42} (see also \cite[Theorem 2.8.7]{AMA.21}),
there exists a continuous function $m:\Lambda\to L^{1/\beta}(I, T)$ such that
$m(x)\in\overline{U}(x)$ for all $x\in \Lambda$.
Consider the map $\mathcal{P}:L^{1/\beta}(I, T)$ $\to L^{1/\beta}(I, T)$
defined by $\mathcal{P}(\theta_1,\ldots,\theta_r):=m(S(\theta_1,\ldots,\theta_r))$.
According to \eqref{eq:3.2} and the definition of $T_\varphi$, $\mathcal{P}(\theta_1,\ldots,\theta_r)\in T_\varphi$
for every $\theta_1,\ldots,\theta_r\in T_\varphi$. Due to Lemma~\ref{Lemma 3.2} and the continuity of $m$,
the map $\mathcal{P}: \omega$-$T_{\varphi}\to \omega$-$T_{\varphi}$ is continuous.
Since $\omega$-$T_{\varphi}$ is a convex metrizable compact set
in $\omega$-$L^{1/\beta}(I, T)$, by applying Schauder's fixed point theorem, we deduce
that this map has a fixed point $(\theta_{1,*},\ldots,\theta_{r,*})\in T^{r}_{\varphi}$,
that is, $(\theta_{1,*},\ldots,\theta_{r,*})
=\mathcal{P}(\theta_{1,*},\ldots,\theta_{r,*})=m(S(\theta_{1,*},\ldots,\theta_{r,*}))$.
Let $(u_{1,*},\ldots,u_{r,*}):=(\theta_{1,*},\ldots,\theta_{r,*})$ and
$x_* := S(\theta_{1,*},\ldots,\theta_{r,*})$. Then,
$(u_{1,*},\ldots,u_{r,*})=m(x_*)$ and $x_*=S(u_{1,*},\ldots,u_{r,*})$.
Thus, we have
\begin{align*}
x_*(t)&=S(u_{1,*},\ldots,u_{r,*})(t)=S_{\alpha}(t)M[x_{0}-h(x_*(t), B_{r}(t)u_{r,*}(t))]\\
&+\int_{0}^{t}(t-s)^{\alpha-1}T_{\alpha}(t-s)L^{-1}f(s, x_*(s),
B_{1}(s)u_{1,*}(s),\ldots, B_{r-1}(s)u_{r-1,*}(s))ds,
\end{align*}
$$
u_{1,*},\ldots,u_{r,*}\in U(t,x_*(t))\,\,\text{a.e.} \,\, t\in I,
$$
which imply that $(x_*(\cdot),u_{1,*}(\cdot),\ldots,u_{r,*}(\cdot))$
is a solution of the control system \eqref{eq:1.1}--\eqref{eq:1.3}.
Hence, $\mathcal{R}_U$ is nonempty. It is easy to see that
$\mathcal{R}_{\operatorname{cl} \operatorname{conv} U}\subseteq \Lambda\times T_{\varphi}$.
Since $\Lambda$ is compact in $C(I, X)$ and $T_{\varphi}$ is metrizable convex
compact in $\omega$-$L^{1/\beta}(I, T)$, we have that
$\mathcal{R}_{\operatorname{cl} \operatorname{conv} U}$ is relatively compact in
$C(I, X)\times\omega$-$L^{1/\beta}(I, T)$. Hence, to complete the proof of this
theorem, it is sufficient to prove that
$\mathcal{R}_{\operatorname{cl} \operatorname{conv} U}$
is sequentially closed in $C(I, X)\times\omega$-$L^{1/\beta}(I, T)$.
Let $\{(x_n(\cdot),u_{1,n}(\cdot),\ldots,u_{r,n}(\cdot)\}_{n\geq1}$
$\subseteq \mathcal{R}_{\operatorname{cl} \operatorname{conv} U}$
be a sequence converging to
$(x(\cdot),u_{1}(\cdot),\ldots,u_{r}(\cdot))$ in the space
$C(I, X)\times\omega$-$L^{1/\beta}(I, T)$. Then we have
$x_n=S(u_{1,n},\ldots,u_{r,n})$ and $(u_{1,n},\ldots,u_{r,n})$ $\to (u_{1},\ldots,u_{r})$
in $\omega$-$L^{1/\beta}(I, T)$. Denote $z := S(u_1,\ldots,u_r)$. From Lemma~\ref{Lemma 3.2},
we obtain that $z=x$, that is, $x$ is a solution of \eqref{eq:3.3}--\eqref{eq:3.4}
corresponding to $u_1,\ldots,u_r$. Hence, to prove that $(x(\cdot),u_{1}(\cdot),
\ldots,u_{r}(\cdot))\in \mathcal{R}_{\operatorname{cl} \operatorname{conv} U}$,
we only need to verify that $u_1,\ldots,u_r\in \operatorname{cl} \operatorname{conv} U(t, x(t))$ a.e. in $t\in I$.
Since $u_{1,n}\to u_1,\ldots,u_{r,n}\to u_r$ in $\omega$-$L^{1/\beta}(I, T)$,
by Mazur's theorem, we have
\begin{equation}
\label{eq:4.4}
u_{1}(t)\in\bigcap\limits_{n=1}^{\infty}
\operatorname{cl} \operatorname{conv} \left(\bigcup\limits_{k=n}^{\infty}
u_{1,k}(t)\right),\ldots,u_{r}(t)\in\bigcap\limits_{n=1}^{\infty}
\operatorname{cl} \operatorname{conv} \left(\bigcup\limits_{k=n}^{\infty}
u_{r,k}(t)\right)\quad \text{for a.e. }t\in I.
\end{equation}
From hypothesis (H3.2) and the fact that
$d_H(\operatorname{cl} \operatorname{conv} A,\operatorname{cl} \operatorname{conv} B)
\leq d_H(A,B)$ for sets $A,B$, the map $x \to \operatorname{cl} \operatorname{conv} U(t,x)$
is $H$-continuous. Then, from Proposition 1.2.86 in \cite{AMA.21}, we conclude that the map
$x\to\operatorname{cl} \operatorname{conv} U(t,x)$ has property $Q$. Therefore, we have
\begin{equation}
\label{eq:4.5}
\bigcap\limits_{n=1}^{\infty}\operatorname{cl} \operatorname{conv}
\left(\bigcup\limits_{k=n}^{\infty}\operatorname{cl} \operatorname{conv} U(t,x_k(t))\right)
\subseteq \operatorname{cl} \operatorname{conv} U(t,x(t))
\quad \text{for a.e. }t\in I.
\end{equation}
By \eqref{eq:4.4} and \eqref{eq:4.5}, we obtain
that $u_{1}(t),\ldots,u_{r}(t)\in \operatorname{cl} \operatorname{conv}
U(t,x(t))$ a.e. in $t\in I$. This means that
$\mathcal{R}_{\operatorname{cl} \operatorname{conv} U}$
is compact in $C(I, X)\times\omega$-$L^{1/\beta}(I, T)$.
\hfill $\Box$
\end{proof}


\section{Main Results}
\label{sec:5}

In order to state and prove our main results,
we firstly show the following helpful lemma.

\begin{lemma}
\label{Lemma 5.1}
For any function $x_*\in C(I, X)$ and any measurable selectors $u_{1,*},\ldots,u_{r,*}$
of the map $t\to \operatorname{cl} \operatorname{conv} U(t, x_*(t))$, there are sequences
$u_{1,n}(t),\ldots,u_{r,n}(t)$, $n\geq1$, of measurable
selectors of the map $t\to U(t, x_*(t))$, such that
\begin{equation}
\label{eq:5.1}
\sup_{0\leq t_1\leq t_2\leq a}\sum\limits_{i=1}^{r}\biggl\|\int_{t_1}^{t_2}(u_{i,*}(s)-u_{i,n}(s))ds
\biggr\|_T\leq\sum\limits_{i=1}^{r}\frac{1}{in},
\end{equation}
\begin{equation}
\label{eq:5.2}
\sup_{0\leq t_1\leq t_2\leq a}\sum\limits_{i=1}^{r}\biggl\vert
\int_{t_1}^{t_2}(g_{i}^{**}(s, x_*(s), u_{i,*}(s))
-g_{i}(s, x_*(s), u_{i,n}(s)))ds
\biggr\vert\leq\sum\limits_{i=1}^{r}\frac{1}{in}.
\end{equation}
The sequences $u_{i,n}$ converge to $u_{i,*}$ in $\omega$-$L^{1/\beta}(I, T)$.
\end{lemma}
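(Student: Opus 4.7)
The plan is to lift the problem to the product space $\overline{T}=T\times\mathbb{R}$ and apply a Lyapunov/Tolstonogov-type density theorem to the multifunction $F$ defined in \eqref{eq:F}. The whole point of having constructed $F$ in Section~\ref{sec:3} is precisely to turn ``approximating $u_{i,*}$ together with the values of $g_i^{**}$ by $u_{i,n}$ together with the values of $g_i$'' into a single selection problem for $F$.

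First I would set $\overline{u}_{i,*}(t):=(u_{i,*}(t),\,g_{i}^{**}(t,x_*(t),u_{i,*}(t)))$ for $i=1,\ldots,r$. These are measurable: $u_{i,*}$ is measurable by hypothesis, and the scalar component is measurable because, by Lemma~\ref{Lemma 3.4}(3), $g_i^{**}$ is (after removing an arbitrarily small exceptional set from $I$) lower semicontinuous in $(t,x,u)$, hence Borel, so its composition with the measurable map $t\mapsto(t,x_*(t),u_{i,*}(t))$ is measurable. By Lemma~\ref{Lemma 3.4}(2) one has $\overline{u}_{i,*}(t)\in\operatorname{cl}\operatorname{conv}F(t,x_*(t))$ a.e.\ in $I$.

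Next, Lemma~\ref{Lemma 3.3} tells us that $F(\cdot,x_*(\cdot))$ is a measurable multifunction with closed bounded values, Lipschitz in $x$ with an $L^1$ constant, and integrably bounded (using \eqref{eq:3.2} together with the growth estimates on $a_4,b_4$ in (H4.3)). On this basis I would invoke the classical density theorem of Lyapunov/Tolstonogov type (obtainable via the techniques in \cite{AMA.41}): for any measurable selector $\overline{v}$ of $\operatorname{cl}\operatorname{conv}F(\cdot,x_*(\cdot))$ and any $\varepsilon>0$, there is a measurable selector $\overline{w}$ of $F(\cdot,x_*(\cdot))$ with $\Vert\overline{v}-\overline{w}\Vert_{\omega}<\varepsilon$ in $L^{1/\beta}(I,\overline{T})$. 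Applying this with tolerance $1/(irn)$ to each $\overline{u}_{i,*}$ produces measurable $\overline{u}_{i,n}(t)=(u_{i,n}(t),\tau_{i,n}(t))\in F(t,x_*(t))$. By the very definition of $F$, this means $u_{i,n}(t)\in U(t,x_*(t))$ and $\tau_{i,n}(t)=g_{i}(t,x_*(t),u_{i,n}(t))$. Because $\Vert\cdot\Vert_{\overline{T}}=\max(\Vert\cdot\Vert_T,|\cdot|)$, the single weak-norm estimate on $\overline{T}$ dominates both the pure $T$-component and the scalar component; summing over $i$ then yields \eqref{eq:5.1} and \eqref{eq:5.2}.

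To close, note that each $u_{i,n}$ takes values in $U(t,x_*(t))$ and is therefore dominated by $\varphi\in L^{1/\beta}(I,\mathbb{R}^+)$ by \eqref{eq:3.2}; the sequences are thus bounded in $L^{1/\beta}(I,T)$, and \eqref{eq:5.1} is precisely weak-norm convergence, so Lemma~\ref{Lemma 2.1} delivers $u_{i,n}\to u_{i,*}$ in $\omega$-$L^{1/\beta}(I,T)$. The main obstacle is the honest verification of the density step: one must check that $F(\cdot,x_*(\cdot))$ has the properties required by the Lyapunov-type result (measurability, closed decomposable selectors, integrable bound) and that the lifted $\overline{u}_{i,*}$ are indeed measurable selectors of $\operatorname{cl}\operatorname{conv}F(\cdot,x_*(\cdot))$; once this scaffolding is in place, the splitting of the $\overline{T}$-norm estimate into \eqref{eq:5.1}--\eqref{eq:5.2} and the passage to $\omega$-$L^{1/\beta}$ are routine.
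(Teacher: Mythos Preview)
Your proposal is correct and follows essentially the same route as the paper: lift each $u_{i,*}$ to $\overline{u}_{i,*}=(u_{i,*},g_i^{**}(\cdot,x_*,u_{i,*}))\in\operatorname{cl}\operatorname{conv}F(\cdot,x_*(\cdot))$ via Lemma~\ref{Lemma 3.4}, use Lemma~\ref{Lemma 3.3} for the measurability and integrable boundedness of $F$, apply the Tolstonogov density theorem (the paper cites \cite[Theorem~2.2]{AMA.42} for this step rather than \cite{AMA.41}) to obtain selectors $\overline{u}_{i,n}\in F(\cdot,x_*(\cdot))$ close in the weak norm, split the $\overline{T}$-estimate into \eqref{eq:5.1}--\eqref{eq:5.2}, and finish with Lemma~\ref{Lemma 2.1}. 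The only cosmetic difference is that the paper takes tolerance $1/(in)$ directly (matching the right-hand sides of \eqref{eq:5.1}--\eqref{eq:5.2}) rather than your $1/(irn)$, but this is immaterial.
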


\begin{proof}
Let $\overline{u}_{i,*}(t):=\left(u_{i,*}, g_{i}^{**}(t, x_*(t), u_{i,*}(t))\right)$, $i=1,\ldots,r$.
According to Lemma~\ref{Lemma 3.4}, we know that $\overline{u}_{i,*}(t)$
are measurable selectors of the map $t \to \operatorname{cl} \operatorname{conv} F(t, x_*(t))$.
From Lemma~\ref{Lemma 3.3}, the map $t\to F(t, x_*(t))$ is measurable and integrally bounded.
Hence, by using \cite[Theorem 2.2]{AMA.42}, we have that, for any $n\geq1$,
there exist measurable selections $\overline{u}_{1,n}(t),\ldots,\overline{u}_{r,n}(t)$
of the map $t\to F(t, x_*(t))$ such that
$$
\sup_{0\leq t_1\leq t_2\leq a}\biggl\|
\int_{t_1}^{t_2}(\overline{u}_{i,*}(s)-\overline{u}_{i,n}(s))ds
\biggr\|_{\overline{T}}\leq\frac{1}{in},
\quad i=1,\ldots,r.
$$
The definitions of $F$ and the weak norm on $L^{1/\beta}(I, \overline{T})$
give $\overline{u}_{i,n}(t)=(u_{i,n}, g_{i}(t, x_*(t), u_{i,n}(t)))$ and
$u_{i,n} \in U(t, x^*(t))$, $i=1,\ldots,r$, a.e. Then formulas \eqref{eq:5.1} and \eqref{eq:5.2} follow.
Hence, from Lemma~\ref{Lemma 2.1}, $u_{i,n}\to u_{i,*}$ in $\omega$-$L^{1/\beta}(I, T)$.
\hfill $\Box$
\end{proof}

\begin{theorem}
\label{Theorem 5.1}
Let any $(x_*(\cdot),u_{1,*}(\cdot),\ldots,u_{r,*}(\cdot))
\in \mathcal{R}_{\operatorname{cl} \operatorname{conv} U}$.
Then there exists a sequence
$$
(x_n(\cdot), u_{1,n}(\cdot),\ldots,u_{r,n}(\cdot))\in\mathcal{R}_{U},
\quad n\geq1,
$$
such that
\begin{equation}
\label{eq:5.3}
x_n\to x_* ~\text{in}~ C(I, X),
\end{equation}
\begin{equation}
\label{eq:5.4}
u_{i,n}\to u_{i,*}~ \text{in}~ L^{\frac{1}{\beta}}_{\omega}(I, T)~
\text{and}~ \omega\text{-}L^{\frac{1}{\beta}}(I, T),
\end{equation}
\begin{equation}
\label{eq:5.5}
\lim\limits_{n\to \infty}\sup_{0\leq t_1\leq t_2\leq a}
\sum\limits_{i=1}^{r}\biggl\vert\int_{t_1}^{t_2}(g_{i}^{**}(s, x_*(s), u_{i,*}(s))
-g_{i}(s, x_n(s), u_{i,n}(s)))ds
\biggr\vert=0.
\end{equation}
\end{theorem}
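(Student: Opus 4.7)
The plan is to realize the given relaxed solution $(x_*, u_{1,*}, \ldots, u_{r,*}) \in \mathcal{R}_{\operatorname{cl}\operatorname{conv}U}$ as a limit of genuine solutions $(x_n, u_{1,n}, \ldots, u_{r,n}) \in \mathcal{R}_U$ of the non-convex system \eqref{eq:1.1}--\eqref{eq:1.3}. The pipeline has two stages: first, approximate each $u_{i,*}$ by measurable selectors of the \emph{fixed} set-valued map $t \to U(t, x_*(t))$; second, perturb those selectors into genuinely admissible controls, i.e.\ selectors of the \emph{moving} set-valued map $t \to U(t, x_n(t))$.

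For the first stage, I would apply Lemma~\ref{Lemma 5.1} with the function $x_*$ and its own relaxed controls $u_{i,*}$. For each $n \geq 1$ this yields measurable selectors $v_{i,n}$ of $t \to U(t, x_*(t))$ satisfying the weak-norm estimates \eqref{eq:5.1} and \eqref{eq:5.2} (with the as-yet-unconstructed $x_n$ replaced by $x_*$) and with $v_{i,n} \to u_{i,*}$ in $\omega$-$L^{1/\beta}(I, T)$. For the second stage, I would repeat the l.s.c.-selection-plus-Schauder scheme of Theorem~\ref{Theorem 4.1}, but restricted to the refined multifunction $x \mapsto \overline{U}(x)^r \cap B_n$, where $B_n$ is a suitably shrinking neighbourhood of $v_n := (v_{1,n}, \ldots, v_{r,n})$. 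The Hausdorff-Lipschitz hypothesis (H3.2) ensures that this intersection is nonempty on a neighbourhood of $x_*$; Fryszkowski's continuous selection theorem \cite{AMA.42} then produces a continuous $\gamma_n : \Lambda \to L^{1/\beta}(I, T)^r$, and Schauder's fixed point theorem, applied exactly as in Theorem~\ref{Theorem 4.1}, yields $(x_n, u_{i,n}) \in \mathcal{R}_U$ with $u_{i,n}$ close to $v_{i,n}$ in a controlled norm.

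Once $(x_n, u_{i,n})$ is in hand, the three convergences follow smoothly. From $v_{i,n} \to u_{i,*}$ in $\omega$-$L^{1/\beta}$ (Lemma~\ref{Lemma 5.1}) and the closeness $u_{i,n} \approx v_{i,n}$ in the weak norm, the triangle inequality combined with boundedness in $T_\varphi$ and Lemma~\ref{Lemma 2.1} delivers \eqref{eq:5.4}. Then \eqref{eq:5.3} is immediate from Lemma~\ref{Lemma 3.2}, the continuity of $S : \omega$-$T_\varphi \to C(I, X)$, applied to $u_{i,n} \to u_{i,*}$. For \eqref{eq:5.5}, I would insert $\pm g_i(s, x_*(s), v_{i,n}(s))$ into the integrand: the difference $g_i^{**}(s, x_*, u_{i,*}) - g_i(s, x_*, v_{i,n})$ is bounded by $1/(in)$ uniformly in $t_1, t_2$ by \eqref{eq:5.2}, while the remaining difference $g_i(s, x_*, v_{i,n}) - g_i(s, x_n, u_{i,n})$ is controlled via the Lipschitz assumption (H4.2), the $C(I, X)$-convergence of $x_n$, and the closeness of $u_{i,n}$ to $v_{i,n}$.

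The main obstacle, and what dictates the right shape of the second-stage selection, is the strength of norm needed for $u_{i,n} - v_{i,n}$: the $k_4''$-term in (H4.2), once integrated over $[t_1, t_2]$, forces $\int_I \|u_{i,n}(s) - v_{i,n}(s)\|_T\,ds \to 0$, i.e.\ strong $L^1$ control rather than merely weak-norm smallness. The natural way to produce this is a Filippov-type measurable selection: for each $t$, the Hausdorff estimate $d_H(U(t, x_n(t)), U(t, x_*(t))) \le k_3(t) \|x_n(t) - x_*(t)\|_X$ lets us choose $u_{i,n}(t) \in U(t, x_n(t))$ with $\|u_{i,n}(t) - v_{i,n}(t)\|_T \le k_3(t)\|x_n(t) - x_*(t)\|_X + \epsilon_n$; integration together with $\|x_n - x_*\|_C \to 0$ and $k_3 \in L^\infty$ then closes the loop. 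The delicate point is that this pointwise construction must be coordinated with the fixed-point argument, since $x_n$ itself depends on $u_{i,n}$, so one has to bootstrap the closeness of $x_n$ to $x_*$ and of $u_{i,n}$ to $v_{i,n}$ simultaneously through the Schauder step rather than sequentially.
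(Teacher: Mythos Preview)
Your outline is essentially the paper's own strategy: Lemma~\ref{Lemma 5.1} supplies the auxiliary selectors $v_{i,n}\in U(t,x_*(t))$, a Filippov-type pointwise constraint is then fed into the Schauder machinery of Theorem~\ref{Theorem 4.1}, and (H4.2) together with~\eqref{eq:5.2} finishes~\eqref{eq:5.5}.

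Two places deserve sharpening. First, the refined multifunction should be defined \emph{pointwise} in $T$, not as a ball in $L^{1/\beta}$: the paper sets
\[
\Upsilon_n(t,x)=\Bigl\{v_i\in T:\|v_{i,n}(t)-v_i\|_T<k_i(t)\|x_*(t)-x\|_X+\tfrac{1}{in}\Bigr\},
\qquad
\overline{\Upsilon}(t,x)=\overline{\Upsilon_n(t,x)\cap U(t,x)},
\]
so that the Filippov estimate~$\|v_{i,n}(t)-u_{i,n}(t)\|_T\le k_i(t)\|x_*(t)-x_n(t)\|_X+\tfrac{1}{in}$ is \emph{built into} every solution of the constrained system, automatically. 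Second, the circularity you flag is not broken by invoking Lemma~\ref{Lemma 3.2} on $u_{i,n}\to u_{i,*}$ (that would indeed be circular). Instead, the paper first uses the compactness of $\mathcal{R}_{\operatorname{cl}\operatorname{conv}U}$ (Theorem~\ref{Theorem 4.1}) to extract a subsequence $(x_n,u_{i,n})\to(\overline{x},\overline{u}_i)$, then subtracts the two integral equations and splits through $v_{i,n}$; since $v_{i,n}\to u_{i,*}$ weakly, the ``$u_{i,*}$ versus $v_{i,n}$'' pieces vanish, while the ``$v_{i,n}$ versus $u_{i,n}$'' pieces are controlled by the pointwise Filippov bound and absorbed via the singular Gronwall Lemma~\ref{Lemma 2.3}. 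This identifies $\overline{x}=x_*$, after which the Filippov bound gives $v_{i,n}-u_{i,n}\to 0$ strongly and everything closes. So the order is: Schauder with the pointwise constraint $\Rightarrow$ compactness $\Rightarrow$ direct Gronwall estimate, not a direct appeal to the solution map~$S$.
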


\begin{proof}
Let $(x_*(\cdot),u_{1,*}(\cdot),\ldots,u_{r,*}(\cdot))\in\mathcal{R}_{\operatorname{cl} \operatorname{conv} U}$.
From Lemma~\ref{Lemma 5.1}, for any $n\geq1$, there are measurable selectors $v_{1,n}(t),\ldots,v_{r,n}(t)$
of the multivalued map $t \rightrightarrows U(t,x_*(t))$ such that
\begin{equation*}
\sup_{0\leq t_1\leq t_2\leq a}\sum\limits_{i=1}^{r}\biggl\|
\int_{t_1}^{t_2}(u_{i,*}(s)-v_{i,n}(s))ds
\biggr\|_T\leq\sum\limits_{i=1}^{r}\frac{1}{in},
\end{equation*}
\begin{equation}
\label{eq:5.7}
\sup_{0\leq t_1\leq t_2\leq a}\sum\limits_{i=1}^{r}\biggl\vert
\int_{t_1}^{t_2}(g_{i}^{**}(s, x_*(s), u_{i,*}(s))-g_{i}(s, x_*(s), v_{i,n}(s)))ds
\biggr\vert\leq\sum\limits_{i=1}^{r}\frac{1}{in}.
\end{equation}
The sequences $v_{i,n}\to u_{i,*}$ in $\omega\text{-}L^{\frac{1}{\beta}}(I, T)$,
$i=1,\ldots,r$. For each fixed $n\geq1$, by (H3.2), we have that, for any $x\in X$ and a.e.
in $t\in I$, there exist $v_{i}\in U(t,x)$, $i=1,\ldots,r$, such that
\begin{equation}
\label{eq:5.8}
\|v_{i,n}(t)-v_i\|_T<k_i(t)\|x_*(t)-x\|_X+\frac{1}{in}.
\end{equation}
Let the map $\Upsilon_n: I\times X\to 2^T$ be defined by
\begin{equation}
\label{eq:5.9}
\Upsilon_n(t,x):=\left\{v_{i}\in T: v_{i},
i=1,\ldots,r, ~\text{satisfy inequality \eqref{eq:5.8}}\right\}.
\end{equation}
It follows from \eqref{eq:5.8} that $\Upsilon_n(t,x)$ is well defined a.e.
on $I$ and all $x\in X$, and its values are open sets. Using \cite[Corollary 2.1]{AMA.40}
(since we can assume, without loss of generality, that $U(t,x)$ is
$\Sigma\otimes\mathcal{B}_X$ measurable, see \cite[Proposition 2.7.9]{AMA.21}),
we obtain that, for any $\epsilon_1,\ldots,\epsilon_r>0$, there are compact sets
$I_{\epsilon_1},\ldots,I_{\epsilon_r}\subseteq I$ with
$\mu(I\backslash I_{\epsilon_{1}})\leq\epsilon_{1},\dots,\mu(I\backslash I_{\epsilon_{r}})\leq\epsilon_{r}$,
such that the restrictions of $U(t,x)$ to $I_{\epsilon_{1}}\times X,\ldots,I_{\epsilon_{r}}\times X$
are l.s.c and the restrictions of $v_{1,n}(t),\ldots, v_{r,n}(t)$ and $k_1(t),\ldots,k_r(t)$
to $I_{\epsilon_{1}},\ldots,I_{\epsilon_{r}}$, respectively, are continuous.
It means that \eqref{eq:5.8} and \eqref{eq:5.9} imply that the graphs of the
restrictions of $\Upsilon_n(t,x)$ to $I_{\epsilon_{i}}\times X$ are open sets in
$I_{\epsilon_{i}}\times X\times T$, $i=1,\ldots,r$, respectively.
Let the map $\Upsilon: I\times X\to 2^T$ be defined by
$\Upsilon(t,x):=\Upsilon_n(t,x)\cap U(t,x)$.
Clearly, a.e. in $t\in I$ and all $x\in X$, $\Upsilon(t,x)\neq\emptyset$.
Due to the arguments above and Proposition 1.2.47 in \cite{AMA.21}, we know that
the restrictions of $\Upsilon(t,x)$ to $J_{\epsilon_{i}}\times X$ are l.s.c. and
so does $\overline{\Upsilon}(t,x)=\overline{\Upsilon(t,x)}$.
Here the bar stands for the closure of a set in $T$.
Now consider the system \eqref{eq:1.1}--\eqref{eq:1.2}
with the constraint on the controls
\begin{equation}
\label{eq:5.11}
u_1(t),\ldots,u_r(t)\in\overline{\Upsilon}(t,x(t))
\quad \text{a.e. on } I.
\end{equation}
Since $\overline{\Upsilon}(t,x)\subseteq U(t,x)$, the estimate
of Lemma~\ref{Lemma 3.1} also holds in this matter.
Repeating the proof of Theorem~\ref{Theorem 4.1},
we obtain that there is a solution $(x_n(\cdot),u_{1,n}(\cdot),\ldots,u_{r,n}(\cdot))$
of the control system \eqref{eq:1.1}--\eqref{eq:1.2}, \eqref{eq:5.11}.
The definition of $\overline{\Upsilon}$ implies that
$(x_n(\cdot),u_{1,n}(\cdot),\ldots$, $u_{r,n}(\cdot))\in\mathcal{R}_U$ and
\begin{equation}
\label{eq:5.12}
\|v_{i,n}(t)-u_{i,n}(t)\|_T\leq k_i(t)\|x_*(t)-x_n(t)\|_X
+\frac{1}{in}, \quad i=1,\ldots,r.
\end{equation}
Since $(x_n(\cdot),u_{1,n}(\cdot),\ldots,u_{r,n}(\cdot))\in\mathcal{R}_U$, $n\geq1$, and
$(x_*(\cdot),u_{1,*}(\cdot),\ldots,u_{r,*}(\cdot))\in\mathcal{R}_{\operatorname{cl} \operatorname{conv} U}$,
we have
\begin{multline}
\label{eq:5.13}
x_*(t)=S_{\alpha}(t)M[x_{0}-h(x_*(t), B_{r}(t)u_{r,*}(t))]\\
+\int_{0}^{t}(t-s)^{\alpha-1}T_{\alpha}(t-s)L^{-1}f(s, x_*(s),
B_{1}(s)u_{1,*}(s),\ldots, B_{r-1}(s)u_{r-1,*}(s))ds
\end{multline}
and
\begin{multline}
\label{eq:5.14}
x_n(t)=S_{\alpha}(t)M[x_{0}-h(x_n(t), B_{r}(t)u_{r,n}(t))]\\
+\int_{0}^{t}(t-s)^{\alpha-1}T_{\alpha}(t-s)L^{-1}f(s, x_n(s),
B_{1}(s)u_{1,n}(s),\ldots, B_{r-1}(s)u_{r-1,n}(s))ds.
\end{multline}
Theorem~\ref{Theorem 4.1} and $\{(x_n(\cdot),u_{1,n}(\cdot)),
\ldots,u_{r,n}(\cdot))\}_{n\geq1}\subseteq\mathcal{R}_U\subseteq
\mathcal{R}_{\operatorname{cl} \operatorname{conv} U}$ imply that we can assume,
possibly up to a subsequence, that
$(x_n(\cdot),u_{1,n}(\cdot),\ldots,u_{r,n}(\cdot))\to(\overline{x}(\cdot),\overline{u}_{1}
(\cdot),\ldots$, $\overline{u}_{r}
(\cdot))\in\mathcal{R}_{\operatorname{cl} \operatorname{conv} U}$ in
$C(I,X)\times\omega$-$L^{1/\beta}(I, T)$. Subtracting \eqref{eq:5.14}
from \eqref{eq:5.13}, we obtain that
\begin{equation}
\begin{aligned}
\label{eq:5.15}
&\Vert x_*(t)-x_n(t)\Vert_X\\
&\leq \Vert S_{\alpha}(t)M[h(x_*(t), B_{r}(t)u_{r,*}(t))-h(x_*(t), B_{r}(t)v_{r,n}(t))]\Vert_X\\
&\quad +\Vert S_{\alpha}(t)M[h(x_*(t), B_{r}(t)v_{r,n}(t))-h(x_n(t), B_{r}(t)u_{r,n}(t))]\Vert_X\\
&\quad +\biggl\Vert\int_{0}^{t}(t-s)^{\alpha-1}
T_{\alpha}(t-s)L^{-1}[f(s, x_*(s), B_{1}(s)u_{1,*}(s),\ldots, B_{r-1}(s)u_{r-1,*}(s))\\
&\qquad -f(s, x_*(s), B_{1}(s)v_{1,n}(s),\ldots, B_{r-1}(s)v_{r-1,n}(s))]ds\biggr\Vert_X\\
&\quad +\biggl\Vert\int_{0}^{t}(t-s)^{\alpha-1}T_{\alpha}(t-s)L^{-1}[
f(s, x_*(s), B_{1}(s)v_{1,n}(s),\ldots, B_{r-1}(s)v_{r-1,n}(s))\\
&\qquad -f(s, x_n(s), B_{1}(s)u_{1,n}(s),\ldots, B_{r-1}(s)u_{r-1,n}(s))]ds\biggr\Vert_X.
\end{aligned}
\end{equation}
We use the previous estimations of our sufficient set of conditions,
together with the property of the operator $\Upsilon$ defined
in the proof of Lemma~\ref{Lemma 3.2}, and since $v_{i,n}\to u_{i,*}$,
$i=1,\ldots,r$, in $\omega\text{-}L^{\frac{1}{\beta}}(I, T)$
and $x_n\to \overline{x}$ in $C(I, X)$, then by letting
$n\to \infty$ in \eqref{eq:5.15} and realizing Lemma~\ref{Lemma 2.3},
we get $x_*=\overline{x}$, that is, $x_n\to x_*$ in $C(I, X)$. Hence,
from \eqref{eq:5.12}, we have $(v_{i,n}-u_{i,n})\to 0$ in $L^{\frac{1}{\beta}}(I, T)$.
Thus, $u_{i,n}=(u_{i,n}-v_{i,n})+v_{i,n}\to u_{i,*}$ in
$\omega\text{-}L^{\frac{1}{\beta}}(I, T)$ and in
$L_{\omega}^{\frac{1}{\beta}}(I, T)$. Hence, \eqref{eq:5.3} and \eqref{eq:5.4} hold.
Moreover, we have
\begin{equation}
\begin{aligned}
\label{eq:5.16}
\sup_{0\leq t_1\leq t_2\leq a}&\sum\limits_{i=1}^{r}\biggl\vert
\int_{t_1}^{t_2}(g_{i}^{**}(s, x_*(s), u_{i,*}(s))-g_{i}(s, x_n(s), u_{i,n}(s)))ds
\biggr\vert\\
&\leq\sup_{0\leq t_1\leq t_2\leq a}\sum\limits_{i=1}^{r}\biggl\vert
\int_{t_1}^{t_2}(g_{i}^{**}(s, x_*(s), u_{i,*}(s))-g_{i}(s, x_*(s), v_{i,n}(s)))ds
\biggr\vert\\
&\leq\sup_{0\leq t_1\leq t_2\leq a}\sum\limits_{i=1}^{r}\biggl\vert
\int_{t_1}^{t_2}(g_{i}(s, x_*(s), v_{i,n}(s))-g_{i}(s, x_n(s), u_{i,n}(s)))ds
\biggr\vert
\end{aligned}
\end{equation}
and assumption (H4.2) and \eqref{eq:5.12} give
$$
\vert g_{i}(t, x_*(t), v_{i,n}(t))-g_{i}(t, x_n(t), u_{i,n}(t))
\vert\leq (k^{\prime}_{4}(t)+k^{\prime\prime}_{4}k_i(t))\Vert x_*(t)
-x_n(t)\Vert_X+\frac{k^{\prime\prime}_{4}}{in}.
$$
Therefore, the last inequality together with \eqref{eq:5.7} and \eqref{eq:5.16}
imply that \eqref{eq:5.5} holds.
\hfill $\Box$
\end{proof}

\begin{theorem}
\label{Theorem 5.2}
Problem $(RP)$ has a solution and
\begin{equation}
\label{eq:5.17}
\min\limits_{(x, u_i)\in \mathcal{R}_{\operatorname{cl} \operatorname{conv} U}}J^{**}_{i}(x, u_i)
=\inf\limits_{(x, u_i)\in \mathcal{R}_{U}}J_{i}(x, u_i),
\quad i=1,\ldots,r.
\end{equation}
For any solution $(x_*, u_{1,*},\ldots,u_{r,*})$ of problem $(RP)$,
there exists a minimizing sequence
$$
(x_n, u_{1,n},\ldots,u_{r,n})\in \mathcal{R}_U, \quad n \geq 1,
$$
for problem $(P)$, which converges to $(x_*, u_{1,*},\ldots,u_{r,*})$ in the spaces
$C(I, X)\times \omega\text{-}L^{\frac{1}{\beta}}(I, T)$ and in
$C(I, X)\times L_{\omega}^{\frac{1}{\beta}}(I, T)$, and the following formula holds:
\begin{equation}
\label{eq:5.18}
\lim\limits_{n\to \infty}\sup_{0\leq t_1\leq t_2\leq a}\sum\limits_{i=1}^{r}\biggl\vert
\int_{t_1}^{t_2}(g_{i}^{**}(s, x_*(s), u_{i,*}(s))-g_{i}(s, x_n(s), u_{i,n}(s)))ds
\biggr\vert=0.
\end{equation}
Conversely, if $(x_n, u_{1,n},\ldots,u_{r,n})$, $n\geq1$, is a minimizing sequence for problem $(P)$,
then there is a subsequence $(x_{n_{k}}, u_{1,n_{k}},\ldots,u_{r,n_{k}})$, $k\geq1$, of the
sequence $(x_n, u_{1,n},\ldots,u_{r,n})$, $n\geq1$, and a solution $(x_*, u_{1,*},\ldots,u_{r,*})$
of problem $(RP)$ such that the subsequence $(x_{n_{k}}, u_{1,n_{k}},\ldots,u_{r,n_{k}}),$ $k\geq1$,
converges to $(x_*, u_{1,*},\ldots,u_{r,*})$ in $C(I, X)\times \omega\text{-}L^{\frac{1}{\beta}}(I, T)$
and relation \eqref{eq:5.18} holds for this subsequence $(x_{n_{k}}, u_{1,n_{k}},\ldots,u_{r,n_{k}})$, $k\geq1$.
\end{theorem}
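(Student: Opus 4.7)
The plan breaks into four parts corresponding to the statements of the theorem: existence of a solution to $(RP)$, the min--inf identity \eqref{eq:5.17}, the direct implication that every relaxed optimum is the limit of a minimizing sequence for $(P)$, and the converse that every minimizing sequence for $(P)$ accumulates at a relaxed optimum. The main ingredients are Theorem~\ref{Theorem 4.1} (compactness of $\mathcal{R}_{\operatorname{cl} \operatorname{conv} U}$), Lemma~\ref{Lemma 3.4} (lower semicontinuity and convexity of the $g_i^{**}$), and the approximation result of Theorem~\ref{Theorem 5.1}.

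For the existence claim, I would verify that each functional $J_i^{**}$ is sequentially lower semicontinuous on $\mathcal{R}_{\operatorname{cl} \operatorname{conv} U}$ equipped with the topology of $C(I,X)\times\omega\text{-}L^{1/\beta}(I,T)$. The integrand $g_i^{**}(t,x,u)$ is convex in $u$ (being a bipolar), lower semicontinuous on $I_{\epsilon_i}\times X\times T$ by Lemma~\ref{Lemma 3.4}(3), and satisfies a growth bound inherited from (H4.3) via Lemma~\ref{Lemma 3.4}(2). A standard Balder--Ioffe-type Fatou inequality, applicable precisely because of strong convergence of trajectories in $C(I,X)$ and weak convergence of controls in $L^{1/\beta}(I,T)$, then yields the required lower semicontinuity. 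Since $\mathcal{R}_{\operatorname{cl} \operatorname{conv} U}$ is compact by Theorem~\ref{Theorem 4.1}, each $J_i^{**}$ attains its minimum and $(RP)$ is solvable.

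For \eqref{eq:5.17} and the direct implication I would use Theorem~\ref{Theorem 5.1} in tandem. On the one hand, $\mathcal{R}_U\subseteq\mathcal{R}_{\operatorname{cl} \operatorname{conv} U}$ and, on admissible points, $g_i^{**}\leq g_{i,U}=g_i$, which immediately gives $\min J_i^{**}\leq\inf J_i$. On the other hand, given a minimizer $(x_*,u_{1,*},\ldots,u_{r,*})$ of $(RP)$, Theorem~\ref{Theorem 5.1} produces a sequence $(x_n,u_{1,n},\ldots,u_{r,n})\in\mathcal{R}_U$ satisfying \eqref{eq:5.3}--\eqref{eq:5.5}. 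Taking $t_1=0$ and $t_2=a$ in \eqref{eq:5.5} yields $J_i(x_n,u_{i,n})\to J_i^{**}(x_*,u_{i,*})$, hence $\inf J_i\leq\min J_i^{**}$, proving \eqref{eq:5.17}. This same sequence serves as the minimizing sequence for $(P)$ claimed in the direct part of the theorem, since \eqref{eq:5.18} is precisely \eqref{eq:5.5} and the minimizing property follows from \eqref{eq:5.17} together with the convergence $J_i(x_n,u_{i,n})\to J_i^{**}(x_*,u_{i,*})=\inf J_i$.

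The converse is the delicate part. Given a minimizing sequence $\{(x_n,u_{1,n},\ldots,u_{r,n})\}\subseteq\mathcal{R}_U$, the compactness of $\mathcal{R}_{\operatorname{cl} \operatorname{conv} U}$ delivers a subsequence converging to some $(x_*,u_{1,*},\ldots,u_{r,*})\in\mathcal{R}_{\operatorname{cl} \operatorname{conv} U}$. Combining $J_i^{**}(x_{n_k},u_{i,n_k})\leq J_i(x_{n_k},u_{i,n_k})\to\inf J_i$ with the lower semicontinuity of $J_i^{**}$ and the identity \eqref{eq:5.17} already established, one sees that the limit is a solution of $(RP)$. The \emph{main obstacle} is establishing \eqref{eq:5.18}, which requires uniform rather than merely integrated convergence. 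My plan here is to set $F_k(t):=\int_0^t g_i(s,x_{n_k}(s),u_{i,n_k}(s))\,ds$ and $F(t):=\int_0^t g_i^{**}(s,x_*(s),u_{i,*}(s))\,ds$. The growth bound (H4.3) together with Lemma~\ref{Lemma 3.1} and \eqref{eq:3.2} ensures that $\{F_k\}$ is uniformly bounded and equicontinuous on $[0,a]$. Applying the same Balder--Ioffe Fatou inequality separately on $[0,t]$ and on $[t,a]$ gives $\liminf_k F_k(t)\geq F(t)$ and $\liminf_k(F_k(a)-F_k(t))\geq F(a)-F(t)$, while $F_k(a)\to F(a)$ follows from the minimizing property combined with the already established equality of min and inf. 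These two $\liminf$ estimates squeeze $F_k(t)\to F(t)$ pointwise; Arzel\`a--Ascoli then upgrades pointwise convergence to uniform convergence on $[0,a]$, and since $\int_{t_1}^{t_2}=F_k(t_2)-F_k(t_1)$, the claim \eqref{eq:5.18} for the subsequence follows at once.
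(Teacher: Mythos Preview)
Your argument is correct, and for the existence of a minimizer, the identity \eqref{eq:5.17}, and the direct implication it tracks the paper's proof closely (lower semicontinuity of $J_i^{**}$ via a Balder-type result, compactness of $\mathcal{R}_{\operatorname{cl}\operatorname{conv}U}$ from Theorem~\ref{Theorem 4.1}, and Theorem~\ref{Theorem 5.1} for the approximating sequence).

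The genuine difference is in the converse, specifically in how you obtain \eqref{eq:5.18} for the subsequence. The paper extracts, via the Dunford--Pettis theorem, a further subsequence along which the integrands $g_i(\cdot,x_{n_k}(\cdot),u_{i,n_k}(\cdot))$ converge weakly in $L^1(I,\mathbb{R})$ to some $\lambda_i$; it then uses the multivalued structure of $F$ (Lemmas~\ref{Lemma 3.3} and \ref{Lemma 3.4}) to show $(u_{i,*}(s),\lambda_i(s))\in\operatorname{cl}\operatorname{conv}F(s,x_*(s))$ a.e., whence $g_i^{**}(s,x_*(s),u_{i,*}(s))\le\lambda_i(s)$, and equality of the total integrals forces $\lambda_i=g_i^{**}(\cdot,x_*,u_{i,*})$ a.e.; weak $L^1$ convergence of the integrands then gives \eqref{eq:5.18}. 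Your route bypasses Dunford--Pettis and the $\operatorname{cl}\operatorname{conv}F$ machinery entirely: you apply the already-established weak lower semicontinuity on each of $[0,t]$ and $[t,a]$ and use the convergence of the full integral to squeeze $F_k(t)\to F(t)$ pointwise, upgrading to uniform via equicontinuity. This is more elementary and reuses only tools already in play. The paper's approach, in exchange, yields the stronger intermediate conclusion that the integrands themselves converge weakly in $L^1$, not just their indefinite integrals uniformly.
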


\begin{proof}
By definition of functions $g_{i,U}(t, x, u_i)$, $i=1,\ldots,r$, (H3.3), (H4.3),
and the boundedness of the trajectories $\mathcal{T}r_{\operatorname{cl} \operatorname{conv}U}$
of the control system \eqref{eq:1.1}--\eqref{eq:1.2}, \eqref{eq:1.4} (Lemma~\ref{Lemma 3.1}),
we can get functions $m_i\in L^{1}(I, \mathbb{R}^{+})$ such that
\begin{equation}
\begin{aligned}
\label{eq:5.19}
&-m_i(t)=-[a_4(t)+b_4(t)L_0+c_4(a_3(t)+c_3L_0)]
\leq g_{i,U}(t, x, u_i), ~\text{a.e.}~t\in I,\\
&\text{with all}~x\in Q=\lbrace g\in X: \Vert g\Vert_X
\leq L_0\rbrace, \quad u_i\in U(t, x), \quad i=1,\ldots,r.
\end{aligned}
\end{equation}
Inequality \eqref{eq:5.19} and the properties of the bipolar (see \cite{AMA.15})
directly imply that
\begin{equation}
\label{eq:5.20}
-m_i(t)\leq g_{i}^{**}(t, x, u_i)\leq g_{i,U}(t, x, u_i),
\quad \text{a.e.}~t\in I, \quad x\in Q, \quad u_i\in T.
\end{equation}
Hence, from item (3) of Lemma~\ref{Lemma 3.4}, \eqref{eq:5.20},
and \cite[Theorem 2.1]{AMA.5}, the functional $J^{**}_{i}$, $i=1,\ldots,r$,
are lower semicontinuous on $\mathcal{R}_{\operatorname{cl} \operatorname{conv} U}\subseteq C(I, X)
\times \omega\text{-}L^{\frac{1}{\beta}}(I, T)$. Theorem~\ref{Theorem 4.1}
implies that $\mathcal{R}_{\operatorname{cl} \operatorname{conv} U}$ is compact in
$C(I, X)\times \omega\text{-}L^{\frac{1}{\beta}}(I, T)$. Therefore, problem $(RP)$
has a solution $(x_*, u_{1,*},\ldots,u_{r,*})$.
By item (1) of Lemma~\ref{Lemma 3.4}, we have
\begin{equation}
\label{eq:5.21}
J^{**}_{i}(x_*, u_{i,*})\leq\inf\limits_{(x, u_i)
\in \mathcal{R}_{U}}J_{i}(x, u_i),
\quad i=1,\ldots,r.
\end{equation}
Now, for every solution $(x_*, u_{1,*},\ldots,u_{r,*})$ of problem $(RP)$,
by using Theorem~\ref{Theorem 5.1}, we obtain that there exists a sequence
$(x_n, u_{1,n},\ldots,u_{r,n})\in \mathcal{R}_U$, $n\geq1$, such that
\eqref{eq:5.3}--\eqref{eq:5.5} hold. Since
\begin{equation}
\begin{aligned}
\label{eq:5.22}
\sum\limits_{i=1}^{r}&\biggl\vert\int_{I}(g_{i}^{**}(s,
x_*(s), u_{i,*}(s))-g_{i}(s, x_n(s), u_{i,n}(s)))ds
\biggr\vert\\
&\leq\sup_{0\leq t_1\leq t_2\leq a}\sum\limits_{i=1}^{r}\biggl\vert
\int_{t_1}^{t_2}(g_{i}^{**}(s, x_*(s), u_{i,*}(s))-g_{i}(s, x_n(s), u_{i,n}(s)))ds
\biggr\vert,
\end{aligned}
\end{equation}
by formulas \eqref{eq:5.5}, \eqref{eq:5.21} and \eqref{eq:5.22}, we get that
\eqref{eq:5.17}, \eqref{eq:5.18} hold and $(x_n(\cdot), u_{1,n}(\cdot),\ldots,u_{r,n}(\cdot))$
$\in \mathcal{R}_U$, $n\geq1$, is a minimizing sequence for problem $(P)$.
Let $(x_n(\cdot), u_{1,n}(\cdot),\ldots,u_{r,n}(\cdot))\in \mathcal{R}_U$, $n\geq1$,
be a minimizing sequence for problem $(P)$. According to Theorem~\ref{Theorem 4.1},
without loss of generality, we can assume that $(x_n, u_{1,n},\ldots,u_{r,n})$
$\to (x_*, u_{1,*},\ldots,u_{r,*})\in \mathcal{R}_{\operatorname{cl} \operatorname{conv} U}$
in $C(I, X)\times \omega\text{-}L^{\frac{1}{\beta}}(I, T)$ and
\begin{equation}
\label{eq:5.23}
\min (RP)=\lim_{n\to \infty}\sum\limits_{i=1}^{r}
\int_{I}g_{i}(s, x_n(s), u_{i,n}(s))ds.
\end{equation}
It follows from \eqref{eq:5.20} and the properties of function $g_{i}^{**}(t, x, u_i)$  that
\begin{equation}
\begin{aligned}
\label{eq:5.24}
\int_{I}g^{**}_{i}(s, x_*(s), u_{i,*}(s))ds
&\leq \lim_{n\to \infty}\inf\int_{I}g_{i}^{**}(s, x_n(s), u_{i,n}(s))ds\\
&\leq\lim_{n\to \infty}\int_{I}g_{i}(s, x_n(s), u_{i,n}(s))ds.
\end{aligned}
\end{equation}
From \eqref{eq:5.23} and \eqref{eq:5.24}, we obtain that
\begin{equation}
\label{eq:5.25}
\min (RP)=\sum\limits_{i=1}^{r}\int_{I}g_{i}^{**}(s, x_*(s), u_{i,*}(s))ds
=\lim_{n\to \infty}\sum\limits_{i=1}^{r}\int_{I}g_{i}(s, x_n(s), u_{i,n}(s))ds.
\end{equation}
Hence, $(x_*(\cdot), u_{1,*}(\cdot),\ldots,u_{r,*}(\cdot))
\in \mathcal{R}_{\operatorname{cl} \operatorname{conv} U}$
is a solution of problem $(RP)$. Hypotheses (H3.3) and (H4.3) and Lemma~\ref{Lemma 3.1},
imply that $\lbrace g_i(s, x_{n}(s), u_{i,n}(s))\rbrace_{n\geq1}$ is
uniformly integrable. Therefore, by the Dunford--Pettis theorem,
we have that there exists a subsequence $\lbrace g_i(s, x_{n_{k}}(s), u_{i,n_{k}}(s))\rbrace_{k\geq1}$
of the sequence $\lbrace g_i(s, x_{n}(s), u_{i,n}(s))\rbrace_{n\geq1}$ converging to certain functions
$\lambda_i(t)$ in the topology of the space $\omega\text{-}L^{1}(I, \mathbb{R})$.
Since $(u_{i,n_{k}}(s), g_i(s, x_{n_{k}}(s), u_{i,n_{k}}(s))\in F(s, x_{n_{k}}(s))$
a.e. in $s\in I$, Lemma~\ref{Lemma 3.3} implies that
$(u_{i,*}, \lambda_i(s))\in \operatorname{cl} \operatorname{conv} F(s, x_*(s))$, a.e. $s\in I$.
From Lemma~\ref{Lemma 3.4} we obtain that
\begin{equation}
\label{eq:5.26}
g_{i}^{**}(s, x_*(s), u_{i,*}(s))
\leq \lambda_i(s),~ \text{a.e.}~s\in I.
\end{equation}
Hence,
\begin{equation}
\label{eq:5.27}
\sum\limits_{i=1}^{r}\int_{0}^{t}g_{i}^{**}(s, x_*(s), u_{i,*}(s))ds
\leq\sum\limits_{i=1}^{r}\int_{0}^{t}\lambda_i(s)ds
=\lim_{k\to \infty}\sum\limits_{i=1}^{r}
\int_{0}^{t}g_{i}(s, x_{n_{k}}(s), u_{i,n_{k}}(s))ds
\end{equation}
for any $t\in I$. Now we can obtain from \eqref{eq:5.25}--\eqref{eq:5.27} that
$g_{i}^{**}(t, x_*(t), u_{i,*}(t))=\lambda_i(t)$, a.e. $t\in I$.
Hence the subsequence $g_i(s, x_{n_{k}}(s), u_{i,n_{k}}(s))\to g^{**}_{i}(s, x_{*}(s), u_{i,*}(s))$
as $k\to \infty$, in $\omega\text{-}L^{1}(I, \mathbb{R})$. This implies that
$$
\lim\limits_{k\to \infty}\sup_{0\leq t_1\leq t_2\leq a}\sum\limits_{i=1}^{r}\biggl\vert
\int_{t_1}^{t_2}(g_{i}^{**}(s, x_*(s), u_{i,*}(s))-g_{i}(s, x_{n_{k}}(s), u_{i,n_{k}}(s)))ds
\biggr\vert=0.
$$
Hence, we proved that \eqref{eq:5.18} holds for the subsequence $(x_{n_{k}}, u_{i,n_{k}})$, $k\geq1$.
\hfill $\Box$
\end{proof}


\section{Conclusions}
\label{sec:conc}

We studied optimality and relaxation of multiple control problems,
described by Sobolev type nonlinear fractional differential equations
with nonlocal control conditions in Banach spaces.
The optimization problems were defined by multi-integral functionals
with integrands that are not convex in the controls, subject
to control systems with mixed nonconvex constraints on the controls.
We proved appropriate sufficient conditions assuring existence
of optimal solutions for the relaxed problems. Moreover, we have shown,
in suitable topologies, that the optimal solutions are limits of minimizing
sequences of systems with respect to the trajectory, multi-controls,
and the functional.


\begin{acknowledgements}
This research was initiated during a visit of Debbouche and Torres
to the Department of Mathematical Analysis of the University of Santiago of Compostela,
Spain, followed by a visit of Debbouche to the Department of Mathematics of
University of Aveiro, Portugal. The hospitality and the financial support
provided by the host institutions in Spain and Portugal are here gratefully acknowledged.
Nieto has been partially supported by the Ministerio de Econom\'{\i}a y Competitividad
of Spain under grants MTM2010--15314 and MTM2013--43014--P, Xunta de Galicia
under grant R2014/002, and co-financed by the European Community fund FEDER.
Torres was supported by funds through The Portuguese Foundation for Science and Technology (FCT),
within CIDMA project UID/MAT/04106/2013 and OCHERA project PTDC/EEI-AUT/1450/2012,
co-financed by FEDER under POFC-QREN with COMPETE reference FCOMP-01-0124-FEDER-028894.
\end{acknowledgements}



\end{document}